\newcommand{\bse}{\boldsymbol{\eta}}
\newcommand{\bsn}{\boldsymbol{\nu_k}}
\newcommand{\bsZ}{{\mathbf Z}}
\newcommand{\bsG}{{\mathbf G}}
\newcommand{\bsR}{{\mathbf R}}
\newcommand{\bsX}{{\mathbf X}}
\newcommand{\bsY}{{\mathbf Y}}
\newcommand{\bsB}{\boldsymbol{\beta}}
\newcommand{\bds}[1]{\boldsymbol{#1}}
\newcommand{\Var}{\mbox{var} }
\newcommand{\Tr}{\mbox{tr} }
\newcommand{\tr}{\mbox{tr} }
\newcommand{\bc}{{\mathbf c}}
\newcommand{\be}{{\mathbf e}}
\newcommand{\bk}{{\mathbf k}}
\newcommand{\bt}{{\mathbf t}}
\newcommand{\bA}{{\mathbf A}}
\newcommand{\bE}{{\mathbf E}}
\newcommand{\bG}{{\mathbf G}}
\newcommand{\bI}{{\mathbf I}}
\newcommand{\bL}{{\mathbf L}}
\newcommand{\bM}{{\mathbf M}}
\newcommand{\bN}{{\mathbf N}}
\newcommand{\bP}{{\mathbf P}}
\newcommand{\bR}{{\mathbf R}}
\newcommand{\bT}{{\mathbf T}}
\newcommand{\bV}{{\mathbf V}}
\newcommand{\bX}{{\mathbf X}}
\newcommand{\bY}{{\mathbf Y}}
\newcommand{\bZ}{{\mathbf Z}}
\newcommand{\sE}{{\textrm{E}}}
\newcommand{\bone}{{\mathbf 1}}
\newcommand{\bzero}{{\mathbf 0}}
\newcommand{\bbeta}{{\boldsymbol \beta}}
\newcommand{\btheta}{{\boldsymbol \theta}}
\newtheorem{theorem}{Theorem}
\newtheorem{lemma}[theorem]{Lemma}
\newtheorem{corollary}[theorem]{Corollary}
\newcommand{\blind}{1}
\begin{document}

\def\spacingset#1{\renewcommand{\baselinestretch}%
{#1}\small\normalsize} \spacingset{1}

%%%%%%%%%%%%%%%%%%%%%%%%%%%%%%%%%%%%%%%%%%%%%%%%%%%%%%%%%%%%%%%%%%%%%%%%%%%%%%

\if1\blind
{
  \title[A Diagnostic for Bias in Linear Mixed Model Estimators]{\bf A Diagnostic for Bias in Linear Mixed Model Estimators Induced by Dependence Between the Random Effects and the Corresponding Model Matrix}
\author{Andrew T. Karl}
\address{Adsurgo LLC}
\author{Dale L. Zimmerman}
\address{University of Iowa}
  \maketitle
} \fi

\if0\blind
{
  \bigskip
  \bigskip
  \bigskip
  \begin{center}
    {\LARGE\bf A Diagnostic for Bias in Linear Mixed Model Estimators Induced by Dependence Between the Random Effects and the Corresponding Model Matrix}
\end{center}
  \medskip
} \fi

\bigskip
\begin{abstract}
We explore how violations of the often-overlooked standard assumption that the random effects model matrix in a linear mixed model is fixed (and thus independent of the random effects vector) can lead to bias in estimators of estimable functions of the fixed effects. However, if the random effects of the original mixed model are instead also treated as fixed effects, or if the fixed and random effects model matrices are orthogonal with respect to the inverse of the error covariance matrix (with probability one), or if the random effects and the corresponding model matrix are independent, then these estimators are unbiased. The bias in the general case is quantified and compared to a randomized permutation distribution of the predicted random effects, producing an informative summary graphic for each estimator of interest. This is demonstrated through the examination of sporting outcomes used to estimate a home field advantage.
\end{abstract}

\noindent%
{\it Keywords:}  Hausman test, misspecification, randomized permutation test, stochastic model matrix
\vfill

\newpage
\spacingset{1.5} % DON'T change the spacing!

\section{Introduction}
\label{sec:intro}
Standard linear mixed models are built conditional on the model matrices for the fixed and random effects, meaning that these matrices are assumed to be fixed and constructed without reference to the anticipated errors or random effects.   Unless the model matrices for the fixed and random effects are orthogonal with respect to the inverse of the error covariance matrix, dependence between the random effects and their corresponding model matrix will induce bias in the estimators of estimable functions of the fixed effects. This paper develops graphical and numeric diagnostics for the bias of estimators of estimable functions of fixed effects in mixed models that are constructed under the assumption that the model matrices are fixed when in fact the random effects model matrix is stochastic.

Consider a linear mixed model with fixed model matrices:
\begin{equation}\label{eq:mixedmodel}
\bY=\bX\bsB+\bZ\bse+\bds{\epsilon}
\end{equation}
for a continuous response, $\bY$, where $\bse$ and $\bds{\epsilon}$ are independent with $\bse\sim N_m(\bds{0},\sigma^2\bG)$ and $\bds{\epsilon}\sim N_n(\bds{0},\sigma^2\bR)$. Although we assume normality for these vectors in order to match the most common applications, the main results on bias do not depend on this assumption.  The matrices $\bG$ and $\bR$ are assumed to be positive definite, $\sigma^2$ is assumed to be positive, and neither $\bX$ nor $\bZ$ are required to be full rank. If	 $\bk'\bsB$ is estimable under this model and $\bV\equiv(1/\sigma^2)\mbox{var$(\bY)$}=\bZ\bG\bZ'+\bR$ is known, then the best linear unbiased estimator (BLUE) of $\bk'\bsB$ is $\bk'\hat{\bsB}$ where $\hat{\bsB}=\left(\bX'\bV^{-1}\bX\right)^{-}\bX'\bV^{-1}\bY$. (Here and throughout, for any matrix $\bM$, $\bM^-$ represents an arbitrary generalized inverse of $\bM$.)

In many important applications of the linear mixed model, $\bR$ is known (often, in fact, $\bR=\bI$) but $\bG$ is unknown; more precisely, the elements of $\bG$ are known functions of an unknown parameter $\btheta$, i.e., $\bG=\bG(\btheta)$.  For inference on $\bk'\bsB$ to proceed in such settings it is necessary to first obtain an estimate $\hat{\btheta}$ which can be substituted for the unknown $\btheta$ to obtain an estimate $\hat{\bG}=\bG(\hat{\btheta})$ and a corresponding estimate $\hat{\bV}=\bZ\hat{\bG}\bZ'+\bR$ of $\bV$; after that, an empirical BLUE (E-BLUE) of $\bk'\bsB$ may be calculated as $\bk'\hat{\hat{\bbeta}}$ where $\hat{\hat{\bbeta}}=\left(\bX'\hat{\bV}^{-1}\bX\right)^{-}\bX'\hat{\bV}^{-1}\bY$.  Though the E-BLUE generally is not linear or best in any sense, it is unbiased when $\bZ$ is fixed provided that $\hat{\btheta}$ is an even and translation-invariant estimator \citep{kackar81}.  (An estimator $\hat{\btheta}=\hat{\btheta}(\bY)$ of $\btheta$ is even and translation invariant if $\hat{\btheta}(-\bY)=\hat{\btheta}(\bY)$ and $\hat{\btheta}(\bY+\bX\bc)=\hat{\btheta}(\bY)$ for all $\bY$ and all $\bc$.)

As an alternative to a mixed effects model, a fixed effects model could be fit: 
\begin{equation}\label{eq:fixedmodel}
\bY=\bX_*\bsB_*+\bds{\epsilon} 
\end{equation}
where $\bds{\epsilon}\sim N_n(\bds{0},\sigma^2\bR)$, $\bsB'_*=[\begin{array}{cc}\bsB'& \bse'\end{array}]$, $\bX_*=[\begin{array}{cc} \bX& \bZ\end{array}]$, and $\bse$ is fixed.  If $\bk_*'\bsB_*$ is estimable under this model and $\bR$ is known, then the BLUE of $\bk_*'\bsB_*$ is $\bk_*'\tilde{\bsB}_*$ where $\tilde{\bsB}_*=[\begin{array}{cc}\tilde{\bsB}' & \tilde{\bse}'\end{array}]'=\left(\bX_*'\bR^{-1}\bX_*\right)^{-}\bX_*'\bR^{-1}\bY$. In order to consider only estimable functions of effects that are treated as fixed in the mixed effects model (\ref{eq:mixedmodel}), we will restrict attention to $\bk_*$ that satisfy  $\bk'_*=[\begin{array}{cc}\bk'& \bds{0}'\end{array}]$, where the length of the zero vector is equal to $m$, meaning that the BLUE of $\bk_*'\bsB_*$ is $\bk'\tilde{\bsB}.$  The decision to treat effects as fixed or random has been widely explored previously \citep{robinson1991,stroup, allison2014fixed}.

%We will see that this decision affects the sensitivity of the $\bsB$ estimators that are obtained under the usual assumption that $\bX_{\bse}$ and $\bZ_{\bse}$ are independent of $\bse$, depending both on how the levels of $\bse$ are sampled  and on how $\bX_{\bse}$, $\bZ_{\bse}$, and $\bR$ are constructed. 

The E-BLUE $\bk'\hat{\hat{\bsB}}$ and BLUE $\bk'\tilde{\bsB}$ of, respectively, estimable functions $\bk'{\bsB}$ and $\bk'_*{\bsB}_*$ under the mixed and fixed effects models (\ref{eq:mixedmodel}) and (\ref{eq:fixedmodel}) have differing bias characteristics under more general versions of these models in which $\bZ$ is stochastic  \citep{ allison2,lock2007}.  Henceforth we refer to these more general versions as stochastic-$\bZ$ mixed and fixed effects models.  This issue seems to have received more attention in the economics literature \citep{wu,hausman,hausman2,allison2, wooldridge,lock2007} than in standard statistics textbooks on linear mixed models \citep{verb99,sasbook,stroup,demidenko}. Our aim is to increase this awareness by discussing an application with clear visual evidence and by proposing computationally-light diagnostics and graphics that statistical software could produce in order to help quickly detect bias in $\bk'\hat{\hat{\bsB}}$ on an application-by-application basis.

%This paper derives the fixed effect bias in mixed models and explores how additional output could be added to standard mixed model software in order to provide an estimate of this bias by using quantities already calculated by the software. 

%For the application, suppose $\bY$ is the margin of home-victory in a season of a sport, $\bse$ is a vector of team ratings, $\bsB=\beta$ is a home field advantage, and $\bZ_{\bse}$ will be next year's schedule. The maker of the schedule has a good sense of what $\bse$ will look like, based on how teams have performed this year and in the past. 

Section~\ref{sec:HFE} provides a practical motivation for considering this problem by comparing the fixed and mixed effects model estimates of home field scoring advantage for several sports. Section~\ref{sec:mme} derives sampling properties, including bias, of the E-BLUE and BLUE of estimable functions under the stochastic-$\bZ$ mixed and fixed effects models, and proposes the use of a randomized permutation distribution of predicted random effects to assess the magnitude of the bias. Section~\ref{sec:cont} applies these results to the home field advantage problem. Section~\ref{sec:sportssim} simulates the home field advantage problem in order to illustrate the findings of Section~\ref{sec:mme} by manipulating the team schedules, $\bZ$. An appendix describes simulations that investigate the power of the randomization test.

\section{Estimating Home Field Advantage}\label{sec:HFE}

There has long been observed a ``home-field advantage'' across a variety of sports \citep{lopez2018}.  We define home field scoring advantage (HFA) as the difference in the expected scores of the home and away teams within a game, after the strength of each team has been accounted for.  It is possible to account for the team strengths with either fixed or random effects. This section presents both a fixed effects model and a mixed effects model for the home team margins of victory, $Y_i=y_{H_i}-y_{A_i}$, where $y_{H_i}$ and $y_{A_i}$ are the home and away team scores, respectively, for game $i=1,...,n$. With $m$ teams in a data set, the pattern of game opponents and locations (the schedule) is recorded in an $n\times m$ matrix $\bZ$ as follows: if team $T_H$ hosted team $T_A$ in game $i$, then the $i$-th row of $\bZ$ consists of all zeros except for a $1$ in the column corresponding to $T_H$ and a $-1$ in the column corresponding to $T_A$.  To simplify the discussion, neutral site games are not considered.

\subsection{Fixed Effects Model}
The first plausible model we consider for HFA includes a fixed effect $\lambda$ for the HFA and a vector of fixed team strength effects $\bsB=\left(\beta_1,\ldots,\beta_m\right)'$, where the difference between any two team strength effects represents the expected difference in score in a game between the two teams on a neutral field. For each game $i$, this model assumes
\begin{equation*}
Y_i\sim N\left(\lambda + \beta_{H_i} - \beta_{A_i}, \sigma^2 \right)
\end{equation*}
where $H_i$ and $A_i$ are the indices for the home and away teams, respectively, in game $i$. The errors are assumed to be independent, leading to an overall model 
\begin{equation}\label{eq:femodel}
\bY=\bone\lambda + \bZ\bds{\beta} + \bds{\epsilon}
\end{equation}
where $\bY=(Y_1,\ldots,Y_n)'$, $\bZ$ is fixed, and $\bds{\epsilon}\sim N(\bds{0},\sigma^2\bI)$. This model appears previously as Model 2 of \cite{harville94} and Model 1 of \cite{harville03}. The model matrix $\bX_*=[\bone,\bZ]$  is not full rank and the individual team strength effects $\beta_i$ are not estimable. However, $\lambda$ is estimable, as are the pairwise differences between team strength effects, provided there is sufficient mixing of the teams. For example, in a sport of three teams \{A, B, C\}, $\lambda$ will be estimable if each pair of teams have a home and home series, or if, for example, team B plays at team A, team A plays at team C, and team C plays at team B. \citet[Section 5.2.1]{stroup} provides a good summary of the estimability of fixed effects. 

\subsection{Mixed Effects Model}
Alternatively, the team strengths may be modeled as random effects $\bds{\eta}=\left(\eta_1,\ldots,\eta_m
\right)'$ that are assumed to be independent of the errors, $\bds{\epsilon}$.  Then, $Y_i$ is modeled conditional on the random effects as
\begin{equation*}
Y_i|\bse\sim N\left(\lambda + \eta_{H_i} - \eta_{A_i}, \sigma^2 \right)
\end{equation*}
producing an overall model
\begin{align}\label{eq:mixed}
\bY|\bse&\sim N(\bone\lambda + \bZ\bds{\eta},\sigma^2\bI)\\
\bse&\sim N\left(\bzero,\sigma^2_g\bI\right)\nonumber
\end{align}
As in the fixed effects model, $\bZ$ is assumed to be fixed and $\lambda$ is estimable.
\citet[Equation 2.1]{harville77} considers a generalization of this model for ranking teams. An advantage of placing a distributional assumption on the team strength effects is that it provides a form of regularization for the model and avoids the aforementioned estimability concern.  A disadvantage is that another parameter ($\sigma_g^2$) is introduced that must be estimated before inference on $\lambda$ can proceed.  Here $\bV=\theta\bZ\bZ'+\bI$ where $\theta=\sigma_g^2/\sigma^2$, so there is a single variance component ratio, $\theta$, to estimate.  Commonly, $\theta$ is estimated by the methods of maximum likelihood or residual maximum likelihood (REML); both methods yield even, translation-invariant estimators \citep{kackar81}.

\subsection{Results}

\begin{figure}
\caption{: EBLUPs of the team ratings from the mixed model for the 2017 men's college basketball season plotted by proportion of games played at home by the team, along with a LOESS smoother.}
\label{fig:rating_by_proprotion}
\includegraphics[scale=.55]{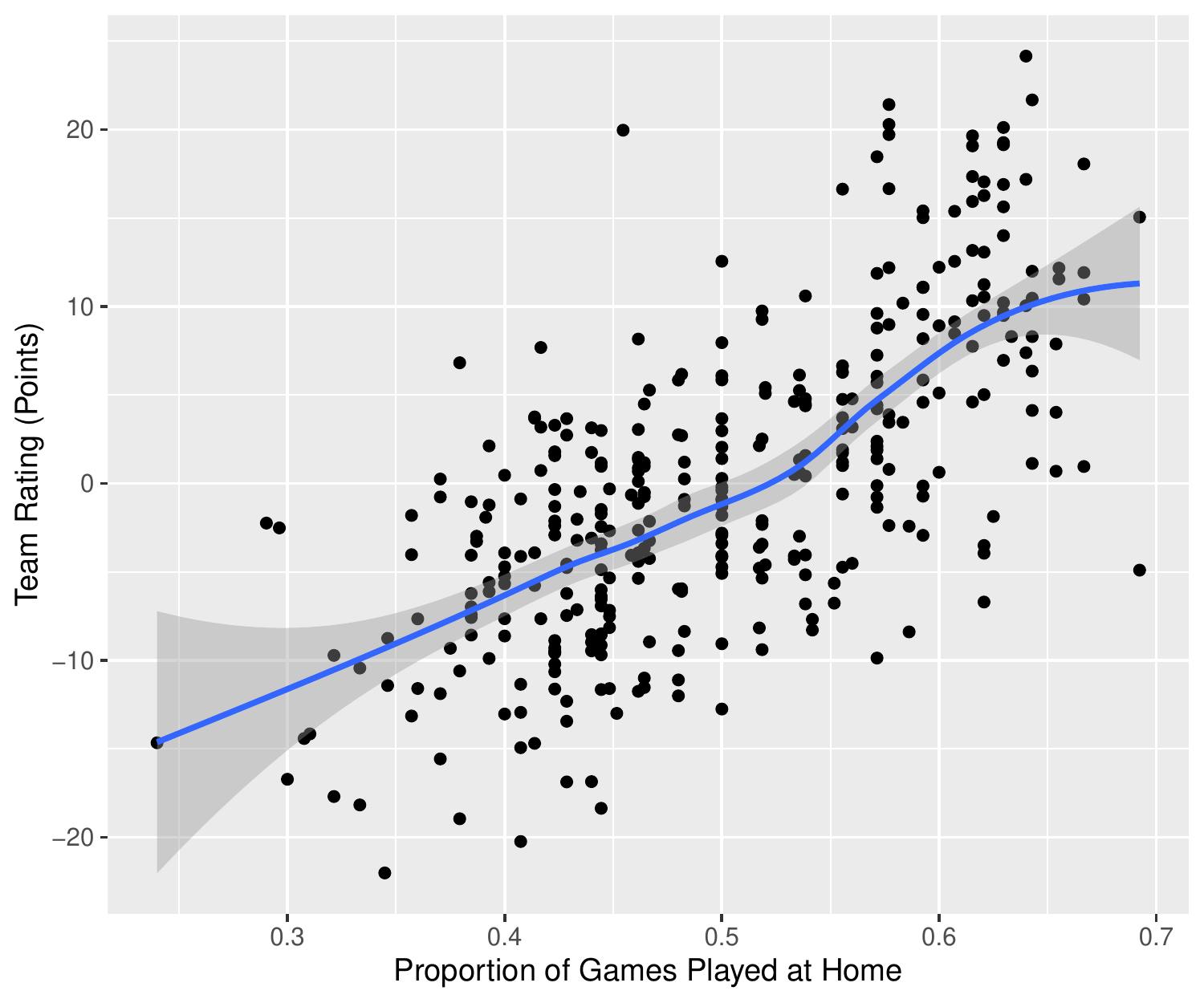}
\end{figure}

Scores from the 2000-2017 regular seasons (excluding playoffs) for each of the six sports of Men's and Women's NCAA Division I basketball, NCAA Football Bowl Subdivision (FBS), the National Football League (NFL), the Women's National Basketball Association (WNBA), and the National Basketball Association (NBA) are furnished by the website of \citet{masseyr}, with some missing WNBA results obtained from WNBA.com. In all cases, neutral site games are excluded in order to simplify the discussion of the application. The supplementary material \citep{kzdata} contains code for downloading the available scores from \cite{masseyr} and for obtaining the REML estimates of the models in Equations \ref{eq:femodel} and \ref{eq:mixed} with the R package \texttt{mvglmmRank} \citep{broatch2}.

Generally, professional teams play balanced schedules with an equal number of home and away games and a representative selection of teams from the league. In the regular season, the NBA games are perfectly balanced in the sense that every team plays 41 home and 41 away games (with occasional exceptions for neutral site games). By contrast, college sports tend to have unbalanced schedules, with the better and more influential teams able to schedule more home than away games.  For example, 10\% of 2017 Division I men's college basketball teams played no more than 39\% of their games at home, while another 10\% of teams played no fewer than 62\% of their games at home. Furthermore, there is a tendency for influential college teams to intentionally schedule weaker opponents for their surplus home games. Athletic directors of better teams accomplish this by building their schedules ($\bZ$) for future years using historical knowledge of the quality of their potential opponents and the general tendency for team quality to be (positively) correlated over years.  This results in a violation of the assumption in the mixed model (\ref{eq:mixed}) that $\bZ$ is fixed, or at least independent of $\bse$.   To illustrate, Figure~\ref{fig:rating_by_proprotion} plots the empirical best linear unbiased predictors (EBLUPs) of the 2017 men's college basketball team ratings ($\hat{\bse}$) from the mixed model against the proportion of games played at home (a function of $\bZ$).

\begin{figure}
\caption{: BLUEs of the HFA under the fixed effects model (\ref{eq:femodel}) (solid blue line) are plotted with the corresponding E-BLUEs under the mixed effects model (\ref{eq:mixed}) (dashed black line) across seasons of college sports (left) and professional sports (right). The lines in the professional plots nearly coincide. }
\label{plot:marg1}
\centering
\includegraphics[scale=.93]{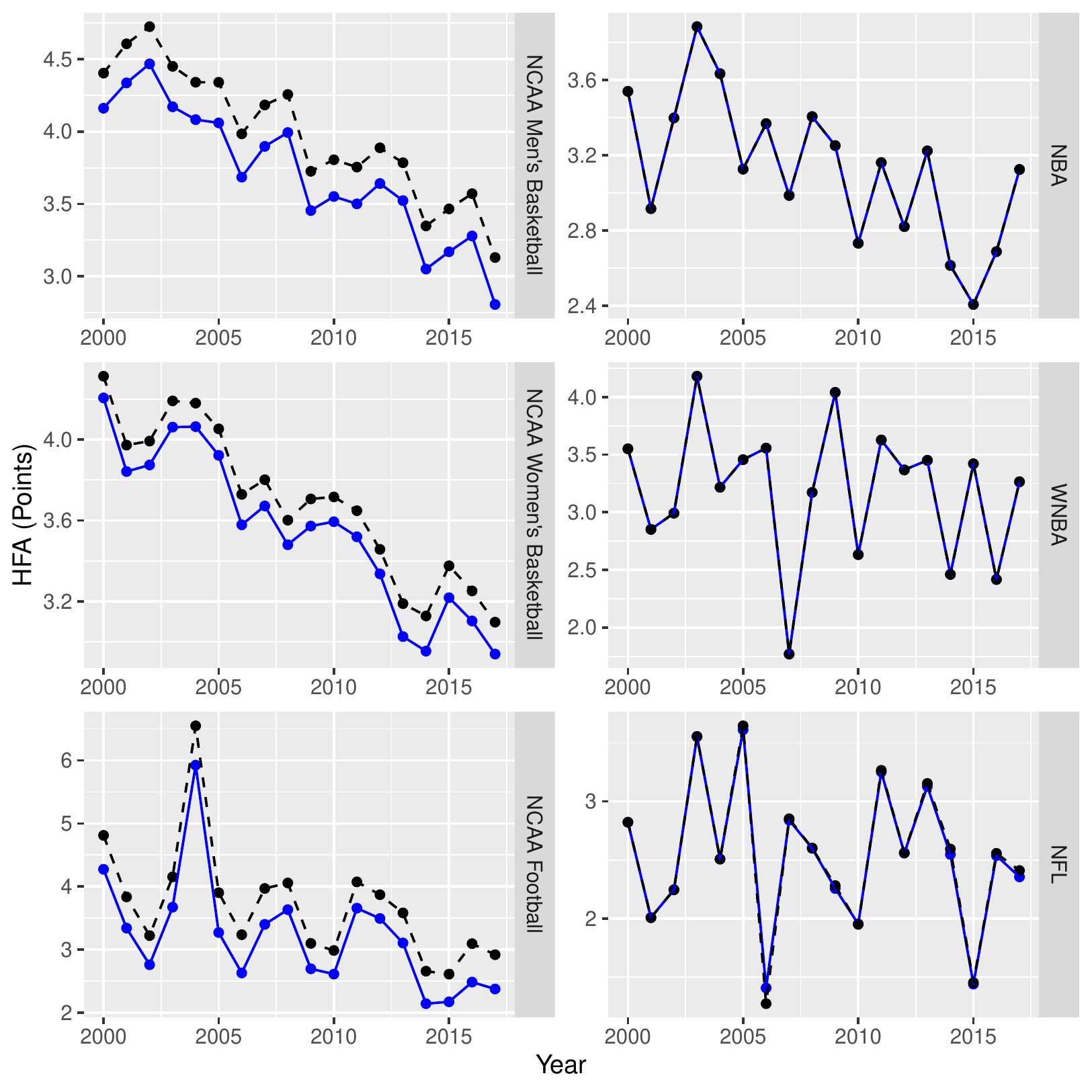}

\end{figure}

Figure~\ref{plot:marg1} shows the HFA estimates from the fixed and mixed effects models for each year in each season, including downward trends in the college HFAs that have been noted by sports journalists \citep{omaha,si}. Estimates of $\lambda$ from the two models are mostly identical in the professional sports, but are cleanly separated in the college sports where the mixed-model estimates (the E-BLUEs) are uniformly larger than the fixed effect estimates (the BLUEs).  Interestingly, the separation between the fixed and mixed model estimates of HFA is relatively constant across years within the sports.  Section~\ref{sec:mme} demonstrates that when $\bZ$ is stochastic, the E-BLUE obtained by fitting the mixed effects model is generally biased but the BLUE obtained by fitting the fixed effects model is unbiased.

\section{Sampling Properties of $\bk'\hat{\hat{\bsB}}$ and $\bk'\tilde{\bsB}$ under Stochastic-$\bZ$ Models}\label{sec:mme}
Throughout this section, let $\bk'\bsB$ be a linear function of $\bsB$ that is estimable under the standard (fixed-$\bZ$) mixed effects model (\ref{eq:mixedmodel}), and suppose that $\bk'\bsB$ is also estimable with probability one under the stochastic-$\bZ$ version of the fixed effects model (\ref{eq:fixedmodel}).  (The HFA described in the previous section is such a function.)  Furthermore, let $\bk'\hat{\hat{\bsB}}$ and $\bk'\tilde{\bsB}$ be the E-BLUE and BLUE of $\bk'\bsB$, respectively, obtained by fitting standard versions of those models, where the estimator $\hat{\btheta}$ used to obtain $\bk'\hat{\hat{\bsB}}$ is even and translation-invariant.  In this section, we investigate sampling properties, specifically the bias and the variance, of the E-BLUE and BLUE under stochastic-$\bZ$ versions of the models.

\subsection{Bias}

\begin{lemma}\label{lemma:ginv}
Let $\bM$ be a generalized inverse of $\bA'\bA$ where $\bA$ is any matrix. Then $\bM\bA'$ is a generalized inverse of $\bA$, so that $\bA\bM\bA'\bA=\bA$.
\end{lemma}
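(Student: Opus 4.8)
The plan is to reduce everything to proving the single matrix identity $\bA\bM\bA'\bA=\bA$; once this is established, $\bM\bA'$ satisfies the defining property of a generalized inverse of $\bA$ by inspection, which is exactly the claim. The only hypothesis at our disposal is that $\bM$ is a generalized inverse of the (symmetric) matrix $\bA'\bA$, that is,
\begin{equation*}
\bA'\bA\bM\bA'\bA=\bA'\bA .
\end{equation*}
Transposing this identity and using $(\bA'\bA)'=\bA'\bA$ yields the companion identity $\bA'\bA\bM'\bA'\bA=\bA'\bA$, which I expect to need alongside the original since $\bM$ itself need not be symmetric.

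The main step is to show that the real matrix $\bD\equiv\bA\bM\bA'\bA-\bA$ vanishes, for which it suffices (over the reals) to show $\bD'\bD=\bzero$. Expanding,
\begin{equation*}
\bD'\bD=\bA'\bA\bM'\bA'\bA\bM\bA'\bA-\bA'\bA\bM'\bA'\bA-\bA'\bA\bM\bA'\bA+\bA'\bA ,
\end{equation*}
and then I will collapse each of the first three terms to $\bA'\bA$ using the two identities above: in the quartic term, first reduce the inner block $\bA'\bA\bM\bA'\bA$ to $\bA'\bA$ and then apply the transposed identity to what remains; the second and third terms reduce directly. The four resulting copies of $\bA'\bA$ cancel in pairs, giving $\bD'\bD=\bzero$, hence $\bD=\bzero$.

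The closest thing to an obstacle is purely bookkeeping: one must keep $\bM'$ (not $\bM$) in the transposed relation, and choose the associativity grouping in the quartic term so that a recognizable $\bA'\bA\bM\bA'\bA$ block is exposed before any simplification. With $\bD=\bzero$ in hand, $\bA\bM\bA'\bA=\bA$ follows immediately, and therefore $\bM\bA'$ is a generalized inverse of $\bA$, completing the proof.
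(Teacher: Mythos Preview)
Your argument is correct: the expansion of $\bD'\bD$ is accurate, each of the first three terms collapses to $\bA'\bA$ exactly as you describe (using the generalized-inverse identity and its transpose), and over the reals $\bD'\bD=\bzero$ forces $\bD=\bzero$. The paper itself does not supply a proof of this lemma at all; it simply cites a linear algebra textbook (Harville) for the result, so your self-contained derivation via the $\bD'\bD=\bzero$ trick actually furnishes more than the paper does here.
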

\noindent The proof of Lemma~\ref{lemma:ginv} can be found in many linear algebra texts, including \citet{har}.

\begin{theorem}\label{theorem2}
Let $\hat{\bsn}'=\bk'(\bX'\hat{\bV}^{-1}\bX)^{-}\bX'\hat{\bV}^{-1}\bZ$.  If $\bZ$ is stochastic in the mixed effects model but $\bds{\epsilon}$ and $\bZ$ are independent, then the bias of $\bk'\hat{\hat{\bsB}}$ is equal to \normalfont $\sE\left[\hat{\bsn}'\bse\right]$.
\end{theorem}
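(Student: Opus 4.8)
The plan is to compute the expectation of $\bk'\hat{\hat{\bsB}}$ directly and show that it differs from $\bk'\bsB$ by exactly $\sE[\hat{\bsn}'\bse]$. First I would substitute the stochastic-$\bZ$ mixed model $\bY = \bX\bsB + \bZ\bse + \bds{\epsilon}$ into the E-BLUE expression $\hat{\hat{\bsB}} = (\bX'\hat{\bV}^{-1}\bX)^{-}\bX'\hat{\bV}^{-1}\bY$, obtaining three terms: one involving $\bX\bsB$, one involving $\bZ\bse$, and one involving $\bds{\epsilon}$. The term $\bk'(\bX'\hat{\bV}^{-1}\bX)^{-}\bX'\hat{\bV}^{-1}\bZ\bse$ is precisely $\hat{\bsn}'\bse$ by definition, so the argument reduces to showing that the $\bX\bsB$ term has expectation $\bk'\bsB$ and the $\bds{\epsilon}$ term has expectation zero.

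For the $\bX\bsB$ term, I would use estimability: since $\bk'\bsB$ is estimable under the fixed-$\bZ$ mixed model, $\bk' = \ba'\bX$ for some $\ba$. Then $\bk'(\bX'\hat{\bV}^{-1}\bX)^{-}\bX'\hat{\bV}^{-1}\bX\bsB = \ba'\bX(\bX'\hat{\bV}^{-1}\bX)^{-}\bX'\hat{\bV}^{-1}\bX\bsB$, and applying Lemma~\ref{lemma:ginv} with $\bA = \hat{\bV}^{-1/2}\bX$ (so that $\bA'\bA = \bX'\hat{\bV}^{-1}\bX$) gives $\bX(\bX'\hat{\bV}^{-1}\bX)^{-}\bX'\hat{\bV}^{-1}\bX = \bX$, hence this term equals $\ba'\bX\bsB = \bk'\bsB$ identically — no expectation needed. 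One subtlety is that $\hat{\bV}$ is random (it depends on $\bY$ and on $\bZ$), so the identity must hold for every realization; Lemma~\ref{lemma:ginv} is an algebraic identity valid for any fixed positive-definite $\hat{\bV}$, so it does hold pointwise, and the estimability relation $\bk' = \ba'\bX$ does not involve $\hat{\bV}$.

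For the $\bds{\epsilon}$ term I must show $\sE\left[\bk'(\bX'\hat{\bV}^{-1}\bX)^{-}\bX'\hat{\bV}^{-1}\bds{\epsilon}\right] = 0$. This is the step I expect to be the main obstacle, because the coefficient matrix $\hat{\bV}^{-1}$ depends on $\bY$ (through $\hat{\btheta}$) and hence on $\bds{\epsilon}$, so I cannot simply pull out the coefficient and use $\sE[\bds{\epsilon}] = 0$. The standard route, following \citet{kackar81}, is to exploit the even and translation-invariant property of $\hat{\btheta}$: conditioning on $\bZ$ and on $\bse$, the quantity $\bk'(\bX'\hat{\bV}^{-1}\bX)^{-}\bX'\hat{\bV}^{-1}\bds{\epsilon}$ is an odd function of $\bds{\epsilon}$ (since $\hat{\btheta}$, and therefore $\hat{\bV}$, is unchanged when $\bds{\epsilon} \mapsto -\bds{\epsilon}$ by translation invariance applied to shift away $\bX\bsB$ and evenness, while $\bds{\epsilon}$ itself flips sign), and $\bds{\epsilon}$ has a distribution symmetric about zero; therefore its conditional expectation given $(\bZ, \bse)$ is zero. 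Here I use independence of $\bds{\epsilon}$ and $\bZ$ — the hypothesis of the theorem — to ensure the conditional distribution of $\bds{\epsilon}$ given $\bZ$ is still symmetric about $\bzero$; no assumption on the joint distribution of $\bZ$ and $\bse$ is needed. Taking the iterated expectation, the $\bds{\epsilon}$ term contributes zero overall, leaving $\sE[\bk'\hat{\hat{\bsB}}] = \bk'\bsB + \sE[\hat{\bsn}'\bse]$, which is the claimed bias. I would close by remarking that normality of $\bds{\epsilon}$ is not used beyond symmetry, consistent with the paper's earlier remark that the bias results do not depend on the normality assumption.
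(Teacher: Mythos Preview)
Your proposal is correct and follows essentially the same route as the paper's proof: decompose $\bk'\hat{\hat{\bsB}}$ into the $\bX\bsB$, $\bZ\bse$, and $\bds{\epsilon}$ pieces, use estimability together with Lemma~\ref{lemma:ginv} (you take $\bA=\hat{\bV}^{-1/2}\bX$, the paper uses the Cholesky factor $\bA=\hat{\bL}^{-1}\bX$) to reduce the first piece to $\bk'\bsB$, and invoke \citet{kackar81} after conditioning on $(\bse,\bZ)$ to eliminate the $\bds{\epsilon}$ piece. Your explicit odd-function description of the Kackar--Harville step is more detailed than the paper's bare citation but otherwise matches it.
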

\begin{proof}
Since $\bk'\bsB$ is estimable under model (\ref{eq:mixedmodel}), there exists a vector $\bt$ such that $\bk'=\bt'\bX$. And since $\hat{\bV}$ is positive definite (with probability one) with Cholesky decomposition  $\hat{\bV}=\hat{\bL}\hat{\bL}'$ (where $\hat{\bL}$ is lower triangular and positive definite), there also exists (with probability one) a vector $\bt_{\hat{\bL}}=\hat{\bL}'\bt$ such that $\bk'=\bt_{\hat{\bL}}'\hat{\bL}^{-1}\bX$. Then
\begin{align}
\sE[\bk'\hat{\hat{\bsB}}]&=\sE[\sE[\bk'(\bX'\hat{\bV}^{-1}\bX)^{-}\bX'\hat{\bV}^{-1}\bY|\bse,\bZ]]\nonumber\\
&=\sE[\sE[\bk'(\bX'\hat{\bV}^{-1}\bX)^{-}\bX'\hat{\bV}^{-1}(\bX\bsB+\bZ\bse+\bds{\epsilon})|\bse,\bZ]]\nonumber\\
&=\sE[\bk'(\bX'\hat{\bV}^{-1}\bX)^{-}\bX'\hat{\bV}^{-1}(\bX\bsB+\bZ\bse)]\label{eq:bias0}\\
&=\sE[\bt_{\hat{\bL}}'\hat{\bL}^{-1}\bX(\bX'\hat{\bL}'^{-1}\hat{\bL}^{-1}\bX)^{-}\bX'\hat{\bL}'^{-1}\hat{\bL}^{-1}(\bX\bsB+\bZ\bse)]\nonumber\\
&=\bk'\bsB+\sE[\hat{\bsn}'\bse]\label{eq:bias}
\end{align}
where Equation~\ref{eq:bias0} uses the main result of \citet{kackar81} 
and Equation~\ref{eq:bias} uses Lemma~\ref{lemma:ginv} with $\bA=\hat{\bL}^{-1}\bX$. 
\end{proof}

Our first corollary to Theorem 2 reveals that bias in $\bk'\hat{\hat{\bsB}}$ occurs because of dependence between $\hat{\bsn}$ and $\bse$, not merely because $\bZ$ is stochastic.

\begin{corollary}\label{t:2}
If $\bZ$ is stochastic in the mixed effects model but $\bds{\epsilon}$, $\bse$, and $\bZ$ are independent, then $\bk'\hat{\hat{\bsB}}$ is unbiased.
\end{corollary}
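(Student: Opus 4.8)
The plan is to condition on $\bZ$ and reduce the claim to the known unbiasedness of the E-BLUE in a standard (fixed-$\bZ$) mixed model. By Theorem~\ref{theorem2} the bias of $\bk'\hat{\hat{\bsB}}$ equals $\sE[\hat{\bsn}'\bse]$ with $\hat{\bsn}'=\bk'(\bX'\hat{\bV}^{-1}\bX)^{-}\bX'\hat{\bV}^{-1}\bZ$, so it is enough to show this expectation vanishes when $\bse$, $\bds{\epsilon}$, and $\bZ$ are mutually independent. (Equivalently, one could bypass Theorem~\ref{theorem2} entirely and argue directly that the conditional expectation of $\bk'\hat{\hat{\bsB}}$ given $\bZ$ equals $\bk'\bsB$.)

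First I would write $\sE[\hat{\bsn}'\bse]=\sE\!\left(\sE[\hat{\bsn}'\bse\mid\bZ]\right)$ by the tower property. Then fix a realization $\bZ=\bz$ in the probability-one set on which $\hat{\bV}$ is positive definite; by Fubini this conditional analysis is valid for almost every such $\bz$. Because $\bse$, $\bds{\epsilon}$, and $\bZ$ are mutually independent, the conditional joint law of $(\bse,\bds{\epsilon})$ given $\bZ=\bz$ is still the original product $N_m(\bzero,\sigma^2\bG)\otimes N_n(\bzero,\sigma^2\bR)$; and since $\bk'\bsB$ is estimable under (\ref{eq:mixedmodel}) we have $\bk'=\bt'\bX$ for some $\bt$, a condition that does not involve $\bZ$. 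Hence, conditionally on $\bZ=\bz$, the data follow a standard mixed model of the form (\ref{eq:mixedmodel}) with the fixed matrix $\bz$ in place of $\bZ$, and $\hat{\btheta}$ is still even and translation-invariant as a function of $\bY$. Carrying out the derivation of Theorem~\ref{theorem2} in this fixed-$\bZ$ setting (its algebra is unchanged) gives $\sE[\bk'\hat{\hat{\bsB}}\mid\bZ=\bz]=\bk'\bsB+\sE[\hat{\bsn}'\bse\mid\bZ=\bz]$, while the unbiasedness of the E-BLUE under a standard mixed model with an even, translation-invariant $\hat{\btheta}$ \citep{kackar81} gives $\sE[\bk'\hat{\hat{\bsB}}\mid\bZ=\bz]=\bk'\bsB$. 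Comparing these, $\sE[\hat{\bsn}'\bse\mid\bZ=\bz]=0$ for almost every $\bz$; taking the outer expectation over $\bZ$ yields $\sE[\hat{\bsn}'\bse]=0$, i.e., $\bk'\hat{\hat{\bsB}}$ is unbiased.

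The step requiring the most care is not a computation but recognizing that one cannot simply assert $\hat{\bsn}\perp\bse$ and factor the expectation as $\sE[\hat{\bsn}]'\sE[\bse]=\bzero$: although $\bse$ is independent of $\bZ$, it is \emph{not} independent of $\hat{\bsn}$, since $\hat{\bsn}$ depends on $\hat{\bV}=\bZ\bG(\hat{\btheta}(\bY))\bZ'+\bR$ and $\bY=\bX\bsB+\bZ\bse+\bds{\epsilon}$. It is precisely the evenness and translation-invariance of $\hat{\btheta}$, invoked conditionally on $\bZ$, that makes the conditional bias vanish. The remaining points — reading ``independent'' as mutual independence so that conditioning on $\bZ$ leaves the joint law of $(\bse,\bds{\epsilon})$ intact, and checking that the probability-one estimability and positive-definiteness conditions survive the conditioning for almost every $\bz$ — are routine measure-theoretic bookkeeping.
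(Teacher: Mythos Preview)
Your argument is correct and follows the same route as the paper: invoke Theorem~\ref{theorem2} to reduce to showing $\sE[\hat{\bsn}'\bse]=0$, condition on $\bZ$, and use the \citet{kackar81} unbiasedness result in the resulting fixed-$\bZ$ model to get $\sE[\hat{\bsn}'\bse\mid\bZ]=0$. Your version unpacks more explicitly \emph{why} the inner conditional expectation vanishes (by comparing the Theorem~\ref{theorem2} decomposition with the Kackar--Harville conclusion) and adds the useful caveat that one cannot simply factor $\sE[\hat{\bsn}'\bse]$ as $\sE[\hat{\bsn}]'\sE[\bse]$, but the substance is the same.
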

\begin{proof}
\begin{align}
\sE[\hat{\bsn}'\bse]=&\sE[\bk'(\bX'\hat{\bV}^{-1}\bX)^{-}\bX'\hat{\bV}^{-1}\bZ\bse]\nonumber\\
=&\sE\{\sE[\bk'(\bX'\hat{\bV}^{-1}\bX)^{-}\bX'\hat{\bV}^{-1}\bZ\bse|\bZ]\}\nonumber\\
=&\sE\{0\}\label{eq:bias1}\\
=&0.\nonumber
\end{align}
where Equation~\ref{eq:bias1} uses the main result of \citet{kackar81} once again.  The result follows by Theorem 2.
\end{proof}

 %The independence of $\bse$ and $\bZ$ is often assumed without verification in applications of mixed models with standard software, suggesting that practitioners may benefit from the availability of diagnostic information about $\hat{\bsn}'\hat{\bse}$.

Our second corollary to Theorem 2 shows that $\bk'\hat{\hat{\bsB}}$ can be unbiased even when $\bZ$ and $\bse$ are dependent, provided that a certain orthogonality condition holds.

\begin{corollary}\label{t:3}
If $\bZ$ is stochastic in the mixed effects model but $\bds{\epsilon}$ and $\bZ$ are independent and $\bZ$ is orthogonal to $\bX$ with respect to $\bR^{-1}$ (with probability one), then $\bk'\hat{\hat{\bsB}}$ is unbiased.
\end{corollary}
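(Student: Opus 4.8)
The plan is to lean entirely on Theorem~\ref{theorem2}. The hypothesis of the corollary that $\bds{\epsilon}$ and $\bZ$ are independent is precisely the hypothesis of Theorem~\ref{theorem2}, so that theorem already tells us the bias of $\bk'\hat{\hat{\bsB}}$ equals $\sE[\hat{\bsn}'\bse]$, where $\hat{\bsn}'=\bk'(\bX'\hat{\bV}^{-1}\bX)^{-}\bX'\hat{\bV}^{-1}\bZ$. So it suffices to show that the orthogonality hypothesis forces $\hat{\bsn}'=\bzero'$ with probability one; then $\sE[\hat{\bsn}'\bse]=0$ trivially, and no further distributional facts about $\bse$ are needed. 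I would read ``$\bZ$ is orthogonal to $\bX$ with respect to $\bR^{-1}$ (with probability one)'' as the statement $\bX'\bR^{-1}\bZ=\bzero$ almost surely.

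The crucial step is to propagate the orthogonality from $\bR^{-1}$ to $\hat{\bV}^{-1}$. Using $\hat{\bV}=\bZ\hat{\bG}\bZ'+\bR$ and the assumed orthogonality, I would compute $\bX'\bR^{-1}\hat{\bV}=\bX'\bR^{-1}\bZ\hat{\bG}\bZ'+\bX'\bR^{-1}\bR=\bzero+\bX'=\bX'$ almost surely; since $\hat{\bV}$ is positive definite (hence invertible) with probability one, right-multiplying by $\hat{\bV}^{-1}$ gives $\bX'\bR^{-1}=\bX'\hat{\bV}^{-1}$ a.s. Consequently $\bX'\hat{\bV}^{-1}\bZ=\bX'\bR^{-1}\bZ=\bzero$ a.s., and therefore $\hat{\bsn}'=\bk'(\bX'\hat{\bV}^{-1}\bX)^{-}(\bX'\hat{\bV}^{-1}\bZ)=\bzero'$ a.s. Invoking Theorem~\ref{theorem2} one last time, the bias is $\sE[\hat{\bsn}'\bse]=0$.

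I do not expect a genuine obstacle here; the only point requiring a little care is bookkeeping of the ``with probability one'' qualifiers, since $\bZ$ (and hence $\hat{\bV}$, $\hat{\bsn}$) is random and the orthogonality is only assumed to hold a.s.\ --- but this is exactly the standing convention already used in the proof of Theorem~\ref{theorem2}, where $\hat{\bV}$ is taken to be positive definite with probability one. If one wished to avoid assuming $\bX'\bR^{-1}\bZ=\bzero$ as an exact matrix identity, an alternative route is to expand $\hat{\bV}^{-1}$ via the Sherman--Woodbury identity and observe that every resulting term in $\bX'\hat{\bV}^{-1}\bZ$ carries a factor of $\bX'\bR^{-1}\bZ$; but the direct manipulation above is cleaner and, unlike the Woodbury route, requires no invertibility of $\hat{\bG}$.
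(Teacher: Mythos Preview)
Your argument is correct and follows the same overall plan as the paper: invoke Theorem~\ref{theorem2} and show $\hat{\bsn}=\bzero$ a.s.\ by proving $\bX'\hat{\bV}^{-1}\bZ=\bzero$ a.s. The only difference is in how that last identity is obtained. The paper appeals to a Woodbury-type expansion (Theorem~18.2.8 of Harville), writing $\bX'\hat{\bV}^{-1}\bZ=\bX'\bR^{-1}\bZ-\bX'\bR^{-1}\bZ\hat{\bT}\bZ'\bR^{-1}\bZ$ with $\hat{\bT}=(\bZ'\bR^{-1}\bZ+\hat{\bG}^{-1})^{-1}$, so every term carries a factor $\bX'\bR^{-1}\bZ$ --- exactly the alternative you sketch at the end of your proposal. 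Your primary route, computing $\bX'\bR^{-1}\hat{\bV}=\bX'$ and then right-multiplying by $\hat{\bV}^{-1}$, is more elementary: it is self-contained, avoids the external citation, and, as you note, does not require $\hat{\bG}$ to be invertible (though the paper's standing assumption that $\bG$ is positive definite makes this a minor point). Either way the conclusion is the same.
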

\begin{proof}
By Theorem 18.2.8 of \citet{har}, $\bX'\hat{\bV}^{-1}\bZ=\bX'\bR^{-1}\bZ -\bX'\bR^{-1}\bZ\hat{\bT}\bZ'\bR^{-1}\bZ$ where $\hat{\bT}=(\bZ'\bR^{-1}\bZ+\hat{\bG}^{-1})^{-1}$. Since $\bX'\bR^{-1}\bZ=\bds{0}$ except on a set of probability zero, $\bX'\hat{\bV}^{-1}\bZ=\bds{0}-\bds{0}\hat{\bT}\bZ'\bR^{-1}\bZ=\bds{0}$, which implies that $\hat{\bsn}=\bds{0}$ with probability 1. Thus, $\sE\left[\hat{\bsn}'\bse\right]=0$.  The result follows by Theorem 2.
\end{proof}

By contrast with the E-BLUE obtained by fitting a mixed effects model with stochastic $\bZ$, the next theorem reveals that the BLUE $\bk'\tilde{\bsB}$ obtained by fitting the fixed effects model is unbiased when $\bZ$ is stochastic without any conditions on $\bse$ (other than it being fixed).

\begin{theorem}\label{t:fixed}
If $\bZ$ is stochastic in the fixed effects model but $\bds{\epsilon}$ and $\bZ$ are independent, then $\bk'\tilde{\bsB}$ is unbiased.
\end{theorem}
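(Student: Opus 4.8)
The plan is to reuse the template of the proof of Theorem~\ref{theorem2}, which simplifies considerably here because $\bR$ is known rather than estimated and the effects $\bse$ have been absorbed into the fixed parameter vector $\bsB_*=[\begin{array}{cc}\bsB' & \bse'\end{array}]'$. With the restriction $\bk_*'=[\begin{array}{cc}\bk' & \bds{0}'\end{array}]$ we have $\bk'\tilde{\bsB}=\bk_*'\tilde{\bsB}_*$ with $\tilde{\bsB}_*=(\bX_*'\bR^{-1}\bX_*)^{-}\bX_*'\bR^{-1}\bY$, and the target of estimation is the nonrandom quantity $\bk_*'\bsB_*=\bk'\bsB+\bds{0}'\bse=\bk'\bsB$; so it suffices to show $\sE[\bk_*'\tilde{\bsB}_*]=\bk'\bsB$.

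First I would compute the mean by iterated expectation, conditioning on $\bZ$. Since $\bds{\epsilon}$ and $\bZ$ are independent, $\sE[\bds{\epsilon}\mid\bZ]=\bds{0}$ and hence $\sE[\bY\mid\bZ]=\bX_*\bsB_*$, giving
\begin{align*}
\sE[\bk_*'\tilde{\bsB}_*]
&=\sE[\sE[\bk_*'(\bX_*'\bR^{-1}\bX_*)^{-}\bX_*'\bR^{-1}\bY\mid\bZ]]\\
&=\sE[\bk_*'(\bX_*'\bR^{-1}\bX_*)^{-}\bX_*'\bR^{-1}\bX_*\bsB_*].
\end{align*}
Next I would show the matrix product inside the outer expectation reduces to $\bk_*'$ with probability one. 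Because $\bk_*'\bsB_*$ is estimable with probability one under the stochastic-$\bZ$ fixed effects model, there exists, with probability one, a vector $\bt_*$ (possibly depending on $\bZ$) with $\bk_*'=\bt_*'\bX_*$. Taking the nonrandom Cholesky factorization $\bR=\bL\bL'$ with $\bL$ lower triangular and nonsingular, set $\bt_{\bL}=\bL'\bt_*$ so that $\bk_*'=\bt_{\bL}'\bL^{-1}\bX_*$, and apply Lemma~\ref{lemma:ginv} with $\bA=\bL^{-1}\bX_*$, noting $\bX_*'\bR^{-1}\bX_*=\bA'\bA$; this gives $\bk_*'(\bX_*'\bR^{-1}\bX_*)^{-}\bX_*'\bR^{-1}\bX_*=\bt_{\bL}'\bA(\bA'\bA)^{-}\bA'\bA=\bt_{\bL}'\bA=\bk_*'$ almost surely. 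Combining, $\sE[\bk_*'\tilde{\bsB}_*]=\sE[\bk_*'\bsB_*]=\bk_*'\bsB_*=\bk'\bsB$, so $\bk'\tilde{\bsB}$ is unbiased.

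I do not expect a substantive obstacle: the argument is essentially that of Theorem~\ref{theorem2} with $\hat{\bV}$ replaced by the nonrandom $\bR$. The two points I would take care with are (i) the ``with probability one'' bookkeeping --- the vector $\bt_*$, and the generalized inverse appearing in $\tilde{\bsB}_*$, may depend on $\bZ$, so the Lemma~\ref{lemma:ginv} identity must be read as holding for almost every realization of $\bZ$, which is harmless inside the expectation --- and (ii) explaining why no $\hat{\bsn}'\bse$-type cross term survives here, unlike in Theorem~\ref{theorem2}: folding $\bse$ into $\bsB_*$ while restricting $\bk_*$ to its leading block means the estimand $\bk_*'\bsB_*=\bk'\bsB$ never involves $\bse$, so there is no channel through which dependence between $\bZ$ and $\bse$ could enter the bias.
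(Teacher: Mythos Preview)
Your argument is correct, but it proceeds along a somewhat different path than the paper's. You work directly with the stacked design matrix $\bX_*=[\bX\ \bZ]$, apply the same Cholesky-plus-Lemma~\ref{lemma:ginv} reduction used in Theorem~\ref{theorem2} (with the simplification that $\bR$ is nonrandom), and conclude that $\bk_*'(\bX_*'\bR^{-1}\bX_*)^{-}\bX_*'\bR^{-1}\bX_*=\bk_*'$ almost surely. The paper instead passes to the partitioned-regression representation of $\bk'\tilde{\bsB}$: setting $\bX_@=\bR^{-1/2}\bX$, $\bZ_@=\bR^{-1/2}\bZ$, and $\bP_{\bZ_@}=\bZ_@(\bZ_@'\bZ_@)^{-}\bZ_@'$, it writes $\bk'\tilde{\bsB}=\bk'[\bX_@'(\bI-\bP_{\bZ_@})\bX_@]^{-}\bX_@'(\bI-\bP_{\bZ_@})\bR^{-1/2}\bY$ and uses that $(\bI-\bP_{\bZ_@})\bZ_@=\bzero$ to annihilate the $\bZ\bse$ term directly, together with the estimability characterization $\bk'=\bt_{\bZ}'(\bI-\bP_{\bZ_@})\bX_@$. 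Your route is more elementary and parallels Theorem~\ref{theorem2} cleanly; the paper's route requires the partialling-out identity but has the side benefit that the same projection machinery is reused verbatim in the variance calculation of Theorem~\ref{t:7}. Your care with the ``with probability one'' bookkeeping and your explanation of why no $\hat{\bsn}'\bse$-type term appears are both apt.
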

\begin{proof}
Since, by assumption, $\bk'\bsB$ is estimable with probability one under the fixed effects model with stochastic $\bZ$, for every $\bZ$ on a set of probability one there exists a vector $\bt_{\bZ}$ such that
$\bk'=\bt_{\bZ}'(\bI-\bP_{\bZ_@})\bX_@$, where $\bP_{\bZ_@}=\bZ_@(\bZ_@'\bZ_@)^{-}\bZ_@'$, $\bZ_@=\bR^{-1/2}\bZ$, and $\bX_@=\bR^{-1/2}\bX$.  Thus, upon defining $\bds{\epsilon}_@=\bR^{-1/2}\bds{\epsilon}$, we find that
\begin{align}
\sE(\bk'\tilde{\bsB})&=\sE\{\sE\{\bk'[\bX_@'(\bI-\bP_{\bZ_@})\bX_@]^{-}\bX_@'(\bI-\bP_{\bZ_@})(\bX_@\bsB+\bZ_@\bse+\bds{\epsilon}_@)|\bZ\}\}\nonumber\\
&=\bk'\bsB+\sE\{\sE[\bk'[\bX_@'(\bI-\bP_{\bZ_@})\bX_@]^{-}\bX_@'(\bI-\bP_{\bZ_@})\bds{\epsilon}_@|\bZ]\}\nonumber\\
&=\bk'\bsB+\sE\{\bk'[\bX_@'(\bI-\bP_{\bZ_@})\bX_@]^{-}\bX_@'(\bI-\bP_{\bZ_@})\}\sE(\bds{\epsilon}_@)\nonumber\\
&=\bk'\bsB\nonumber
\end{align}
\end{proof}

\subsection{Variance}\label{sec:variance}
The variances of $\bk'\hat{\hat{\bsB}}$ and $\bk'\tilde{\bsB}$ are also affected by the stochasticity of $\bZ$ in the mixed and fixed effects models.  However, the variance of $\bk'\hat{\hat{\bsB}}$, unlike the bias, is also affected by the estimation of $\btheta$; in fact, if $\hat{\btheta}$ is even and translation-invariant, then var$(\bk'\hat{\hat{\bsB}})$ is larger than if $\btheta$ were known \citep{kackar84}.  In order to focus exclusively on the effect of a stochastic $\bZ$ on this variance, in this section we assume that $\btheta$ is known.  Results comparing the variances of the E-BLUEs of $\bk'\bsB$ under the fixed-$\bZ$ and stochastic-$\bZ$ models when $\btheta$ is known should shed some light on how those variances compare when $\btheta$ is unknown. 
\begin{theorem}\label{t:4}
If $\bZ$ is stochastic in the mixed effects model but the conditions of Theorem 2 hold and $\btheta$ is known, the variance of the BLUE, $\bk'\hat{\bsB}$, is \normalfont $\sigma^2\textrm{E}[\bk'(\bX'\bV^{-1}\bX)^-\bk] + \Var(\bsn'\bse)-\sigma^2\textrm{E}(\bsn'\bG\bsn)$, where ${\bsn}'=\bk'(\bX'{\bV}^{-1}\bX)^{-}\bX'{\bV}^{-1}\bZ$. 
\end{theorem}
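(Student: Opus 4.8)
\medskip
\noindent\textbf{Proof proposal.} The plan is to write $\bk'\hat{\bsB}-\bk'\bsB$ as a sum of two uncorrelated pieces, one linear in $\bse$ and one linear in $\bds{\epsilon}$, whose coefficient vectors depend on the data only through $\bZ$ (recall $\btheta$, hence $\bV$, is assumed known here), and then apply the law of total variance. Since $\bk'\bsB$ is estimable under model~(\ref{eq:mixedmodel}), $\bk'=\bt'\bX$ for some $\bt$, and the argument in the proof of Theorem~\ref{theorem2} (with $\hat{\bV}$ replaced by $\bV$), via Lemma~\ref{lemma:ginv}, gives $\bk'(\bX'\bV^{-1}\bX)^{-}\bX'\bV^{-1}\bX=\bk'$, so that
\begin{equation*}
\bk'\hat{\bsB}=\bk'\bsB+\bsn'\bse+\bw'\bds{\epsilon},\qquad \bw'\equiv\bk'(\bX'\bV^{-1}\bX)^{-}\bX'\bV^{-1},
\end{equation*}
with $\bsn$ as in the statement; both $\bsn$ and $\bw$ are functions of $\bZ$ alone.

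First I would dispose of the cross term. Reading ``the conditions of Theorem~\ref{theorem2}'' in the form $\bds{\epsilon}\perp(\bse,\bZ)$ (independence of $\bds{\epsilon}$ and $\bZ$ together with the model's standing assumption $\bds{\epsilon}\perp\bse$), conditioning on $(\bse,\bZ)$ yields $\sE[\bsn'\bse\,\bw'\bds{\epsilon}]=\sE[\bsn'\bse\,\bw'\sE(\bds{\epsilon}\mid\bse,\bZ)]=0$ and likewise $\sE[\bw'\bds{\epsilon}]=0$, so $\Cov(\bsn'\bse,\bw'\bds{\epsilon})=0$ and hence $\Var(\bk'\hat{\bsB})=\Var(\bsn'\bse)+\Var(\bw'\bds{\epsilon})$. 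Next, conditioning on $\bZ$, $\sE(\bw'\bds{\epsilon}\mid\bZ)=\bzero$ and $\Var(\bw'\bds{\epsilon}\mid\bZ)=\bw'\Var(\bds{\epsilon}\mid\bZ)\bw=\sigma^2\bw'\bR\bw$ (using $\bds{\epsilon}\perp\bZ$), so $\Var(\bw'\bds{\epsilon})=\sigma^2\sE(\bw'\bR\bw)$.

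The substantive step is the pointwise-in-$\bZ$ identity $\bw'\bR\bw+\bsn'\bG\bsn=\bk'(\bX'\bV^{-1}\bX)^{-}\bk$. To get it, set $\bC=(\bX'\bV^{-1}\bX)^{-}$ and note that, because $\bR+\bZ\bG\bZ'=\bV$, the left side equals $\bk'\bC\bX'\bV^{-1}(\bR+\bZ\bG\bZ')\bV^{-1}\bX\bC'\bk=\bk'\bC(\bX'\bV^{-1}\bX)\bC'\bk$. Writing $\bV=\bL\bL'$, $\bA=\bL^{-1}\bX$, and $\bt_{\bL}=\bL'\bt$ so that $\bk'=\bt_{\bL}'\bA$ and $\bX'\bV^{-1}\bX=\bA'\bA$, both $\bk'\bC(\bX'\bV^{-1}\bX)\bC'\bk$ and $\bk'\bC\bk$ reduce to $\bt_{\bL}'\bA(\bA'\bA)^{-}\bA'\bt_{\bL}$ by Lemma~\ref{lemma:ginv} (using $\bA(\bA'\bA)^{-}(\bA'\bA)=\bA$ and the invariance of $\bA(\bA'\bA)^{-}\bA'$ to the choice of generalized inverse), which proves the identity. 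Multiplying by $\sigma^2$, taking expectations over $\bZ$, and substituting into $\Var(\bk'\hat{\bsB})=\Var(\bsn'\bse)+\sigma^2\sE(\bw'\bR\bw)$ gives the stated expression.

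I expect the only real obstacle to be the generalized-inverse bookkeeping: $\bC$ is an arbitrary generalized inverse, not assumed reflexive or symmetric, so the collapse of $\bk'\bC(\bX'\bV^{-1}\bX)\bC'\bk$ to $\bk'\bC\bk$ cannot rest on any single algebraic property of $\bC$ and must be routed through estimability ($\bk'=\bt'\bX$) and the Cholesky rewrite, which is precisely what brings Lemma~\ref{lemma:ginv} to bear. It is also worth flagging at the outset that the conditions of Theorem~\ref{theorem2} are being used in the strong (joint) form $\bds{\epsilon}\perp(\bse,\bZ)$, since that independence is what annihilates both the cross term and the conditional covariance of $\bse$ with $\bds{\epsilon}$ given $\bZ$. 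As a sanity check, when $\bZ$ is degenerate the formula collapses to the familiar $\sigma^2\bk'(\bX'\bV^{-1}\bX)^{-}\bk$, because then $\Var(\bsn'\bse)=\sigma^2\bsn'\bG\bsn$ cancels the last term.
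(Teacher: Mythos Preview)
Your proposal is correct and follows essentially the same route as the paper. The paper conditions on $(\bse,\bZ)$ and applies the law of total variance directly, whereas you write out the decomposition $\bk'\hat{\bsB}-\bk'\bsB=\bsn'\bse+\bw'\bds{\epsilon}$ and argue zero cross-covariance separately; these are the same calculation, and both reduce the $\bds{\epsilon}$-piece via $\bR=\bV-\bZ\bG\bZ'$ to obtain $\sigma^2\sE[\bk'(\bX'\bV^{-1}\bX)^-\bk]-\sigma^2\sE(\bsn'\bG\bsn)$. Your explicit handling of the generalized-inverse collapse (through estimability and the Cholesky rewrite) spells out a step the paper leaves implicit in passing from its penultimate to its final line.
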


\begin{proof}
\begin{align*}
\Var(\bk'\hat{\bsB})=&\Var\{\textrm{E}[\bk'(\bX'\bV^{-1}\bX)^-\bX'\bV^{-1}\bY|\bse,\bZ]\}+\textrm{E}\{\Var[\bk'(\bX'\bV^{-1}\bX)^-\bX'\bV^{-1}\bY|\bse,\bZ]\}\\
=&\Var[\bk'(\bX'\bV^{-1}\bX)^-\bX'\bV^{-1}(\bX\bsB+\bZ\bse)]+\textrm{E}\{\Var[\bk'(\bX'\bV^{-1}\bX)^-\bX'\bV^{-1}\bds{\epsilon}|\bse,\bZ]\}\\
=&\Var(\bk'\bsB+\bsn'\bse)+\textrm{E}\{\bk'(\bX'\bV^{-1}\bX)^-\bX'\bV^{-1}\sigma^2[\bV-\bZ\bG\bZ']\bV^{-1}\bX[(\bX'\bV^{-1}\bX)^-]'\bk\}\\
=&\sigma^2\textrm{E}[\bk'(\bX'\bV^{-1}\bX)^-\bk]+\Var(\bsn'\bse)-\sigma^2\textrm{E}(\bsn'\bG\bsn)
\end{align*}
\end{proof}

%The bias takes the form of a generalized least square regression of $\bZ\bse$ against $\bX$ with error covariance matrix $\bV$. Under the hypothesis that the bias is $\bds{0}$, then $\bZ\bse\sim N(\bds{0},\bV)$, and $\bds{s}=\bL^{-1}\bZ\bse\sim N(\bds{0},\bI)$. We should be able to fit $\bds{s}$ against any factor of interest without seeing a significant result.

\begin{corollary}\label{t:5}
If $\bZ$ is stochastic in the mixed effects model but $\bds{\epsilon}$, $\bse$, and $\bZ$ are independent, then \normalfont $\Var(\bk'\hat{\bsB})=\sigma^2\textrm{E}[\bk'(\bX'\bV^{-1}\bX)^-\bk]$.
\end{corollary}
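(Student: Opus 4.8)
The plan is to obtain this as essentially a one-line consequence of Theorem~\ref{t:4}. The hypotheses assumed here---mutual independence of $\bds{\epsilon}$, $\bse$, and $\bZ$, with $\btheta$ known---are strictly stronger than those of Theorem~\ref{t:4} (in particular they imply the conditions of Theorem~\ref{theorem2}, namely that $\bds{\epsilon}$ and $\bZ$ are independent), so the variance expression $\sigma^2\sE[\bk'(\bX'\bV^{-1}\bX)^-\bk]+\Var(\bsn'\bse)-\sigma^2\sE(\bsn'\bG\bsn)$ is available, and it suffices to show that its last two terms cancel.

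The key observation is that, since $\btheta$ is known and $\bX$, $\bk$, $\bR$ are fixed, the row vector $\bsn'=\bk'(\bX'\bV^{-1}\bX)^-\bX'\bV^{-1}\bZ$ with $\bV=\bZ\bG\bZ'+\bR$ is a measurable function of $\bZ$ alone; in particular $\bsn$ is independent of $\bse$. I would then condition on $\bZ$ and apply the law of total variance to $\bsn'\bse$. Because $\bse$ is independent of $\bZ$ with $\sE(\bse)=\bzero$, the conditional mean is $\sE(\bsn'\bse\mid\bZ)=\bsn'\sE(\bse)=0$, so the variance-of-the-conditional-mean term drops out. For the expected-conditional-variance term, treating $\bsn$ as constant given $\bZ$ gives $\Var(\bsn'\bse\mid\bZ)=\bsn'\Var(\bse\mid\bZ)\bsn=\sigma^2\bsn'\bG\bsn$, again using independence of $\bse$ from $\bZ$ so that $\Var(\bse\mid\bZ)=\Var(\bse)=\sigma^2\bG$. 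Taking expectations yields $\Var(\bsn'\bse)=\sigma^2\sE(\bsn'\bG\bsn)$, whence $\Var(\bsn'\bse)-\sigma^2\sE(\bsn'\bG\bsn)=0$, and the claimed formula follows from Theorem~\ref{t:4}.

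I do not anticipate any real obstacle. The only point needing a touch of care is the (routine) measurability claim that $\bsn$ depends on the data only through $\bZ$, which is precisely what licenses treating $\bsn$ as fixed once we condition on $\bZ$, and which explains why the exact hypothesis used is independence of $\bse$ from $\bZ$ rather than some weaker uncorrelatedness condition; the remainder is just the law of total variance combined with the already-established Theorem~\ref{t:4}.
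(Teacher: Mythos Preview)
Your argument is correct and reaches the same cancellation $\Var(\bsn'\bse)=\sigma^2\sE(\bsn'\bG\bsn)$ that the paper establishes, but by a different mechanical route. The paper computes $\Var(\bsn'\bse)$ directly as $\sE[\bsn'\bse\bse'\bsn]-(\sE[\bsn'\bse])^2$, rewrites the first term as $\sE[\Tr(\bsn\bsn'\bse\bse')]$, and then uses the independence of $\bsn$ (a function of $\bZ$) and $\bse$ to factor the expectation inside the trace as $\Tr(\sE[\bsn\bsn']\sigma^2\bG)=\sigma^2\sE[\bsn'\bG\bsn]$. You instead condition on $\bZ$ and apply the law of total variance, so the factorization is achieved by the conditional-variance identity $\Var(\bsn'\bse\mid\bZ)=\sigma^2\bsn'\bG\bsn$ rather than by a trace manipulation. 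Both arguments rest on the same substantive point---that with $\btheta$ known, $\bsn$ is a function of $\bZ$ alone and hence independent of $\bse$---and neither is materially shorter or more general than the other; your version perhaps makes the role of conditioning on $\bZ$ a bit more explicit, while the paper's trace computation is slightly more self-contained.
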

\begin{proof}
\begin{align*}
\Var\left(\bsn'\bse\right)=&\sE\left[\bsn'\bse\bse'\bsn\right]-\left(\sE\left[\bsn'\bse\right]\right)^2\\
=&\sE\left[\Tr\left(\bsn\bsn'\bse\bse'\right)\right]\\
=&\tr\left(\bE\left[\bsn\bsn'\right]\sigma^2\bG\right)\\
=&\sigma^2\sE\left[\bsn'\bG\bsn\right]
\end{align*}
which implies that
$\Var(\bk'\hat{\bsB})=\sigma^2\textrm{E}[\bk'(\bX'\bV^{-1}\bX)^-\bk]$ by Theorem~\ref{t:4}.
\end{proof}

%\noindent Additionally, the Cauchy-Schwarz inequality puts an upper bound on the magnitude of the expected bias, $|\bE[\bsn'\bse]|\leq \sqrt{\bE[\bsn'\bsn]\bE[\bse'\bse]}$, with equality when $\bsn$ and $\bse$ are linearly dependent or when $\bsn=\bds{0}$.

\begin{corollary}\label{t:6}
If $\bZ$ is stochastic in the mixed effects model, but $\bds{\epsilon}$ and $\bZ$ are independent and $\bZ$ is orthogonal to $\bX$ with respect to $\bR^{-1}$ (with probability one), then \normalfont $\Var(\bk'\hat{\bsB})=\textrm{ E}[\bk'(\bX'\bV^{-1}\bX)^-\bk]-\sigma^2\textrm{E}(\bsn'\bG\bsn)$.
\end{corollary}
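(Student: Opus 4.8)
The plan is to reduce Corollary~\ref{t:6} to Theorem~\ref{t:4} by showing that the orthogonality hypothesis forces $\bsn$ (which depends only on $\bZ$) to equal $\bds{0}$ with probability one — the same mechanism used in Corollary~\ref{t:3}, but applied to the known $\bV$ and $\bG$ rather than to $\hat{\bV}$ and $\hat{\bG}$, which is legitimate because $\btheta$, hence $\bV$, is taken to be known throughout Section~\ref{sec:variance}.

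First I would observe that the hypotheses of Corollary~\ref{t:6} ($\bds{\epsilon}$ and $\bZ$ independent, $\btheta$ known) include those of Theorem~\ref{t:4}, so that theorem gives
\[
\Var(\bk'\hat{\bsB})=\sigma^2\sE[\bk'(\bX'\bV^{-1}\bX)^-\bk]+\Var(\bsn'\bse)-\sigma^2\sE(\bsn'\bG\bsn),\qquad \bsn'=\bk'(\bX'\bV^{-1}\bX)^-\bX'\bV^{-1}\bZ.
\]
Hence it suffices to establish $\Var(\bsn'\bse)=0$. For that I would invoke Theorem 18.2.8 of \citet{har} (the Sherman--Morrison--Woodbury identity), which yields $\bX'\bV^{-1}\bZ=\bX'\bR^{-1}\bZ-\bX'\bR^{-1}\bZ\,\bT\,\bZ'\bR^{-1}\bZ$ with $\bT=(\bZ'\bR^{-1}\bZ+\bG^{-1})^{-1}$ well-defined by positive definiteness of $\bG$ and $\bR$. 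On the probability-one event where $\bX'\bR^{-1}\bZ=\bds{0}$ (the orthogonality assumption), the right-hand side is $\bds{0}-\bds{0}\,\bT\,\bZ'\bR^{-1}\bZ=\bds{0}$, so $\bX'\bV^{-1}\bZ=\bds{0}$ and therefore $\bsn=\bds{0}$ almost surely. Then $\bsn'\bse=0$ almost surely, so $\Var(\bsn'\bse)=0$, and substituting into the display gives the asserted formula $\Var(\bk'\hat{\bsB})=\sigma^2\sE[\bk'(\bX'\bV^{-1}\bX)^-\bk]-\sigma^2\sE(\bsn'\bG\bsn)$.

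The point that needs the most attention is that, unlike in Corollary~\ref{t:5}, we are not assuming $\bse$ and $\bZ$ independent here, so $\Var(\bsn'\bse)$ cannot be cancelled against $\sigma^2\sE(\bsn'\bG\bsn)$ by a conditioning argument; we genuinely need the stronger, pointwise-in-$\bZ$ conclusion $\bsn=\bds{0}$, and it is the Woodbury identity together with the orthogonality hypothesis that delivers it. I would also note that no properties of the arbitrary generalized inverse $(\bX'\bV^{-1}\bX)^-$ are needed beyond the fact that any matrix product containing a zero factor is zero, so the argument holds for every choice of generalized inverse; and that the same reasoning shows $\sE(\bsn'\bG\bsn)=0$, so the stated right-hand side in fact reduces to $\sigma^2\sE[\bk'(\bX'\bV^{-1}\bX)^-\bk]$, although the displayed identity follows without invoking that simplification.
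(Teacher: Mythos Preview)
Your argument is correct and is essentially the paper's own proof: invoke Theorem~\ref{t:4}, then use the Woodbury-type identity (Theorem~18.2.8 of \citet{har}) together with $\bX'\bR^{-1}\bZ=\bds{0}$ to get $\bX'\bV^{-1}\bZ=\bds{0}$ and hence $\bsn=\bds{0}$ a.s., so $\Var(\bsn'\bse)=0$. The paper states this more tersely by pointing back to the proof of Corollary~\ref{t:3}, but the mechanism is identical; your additional remark that the same reasoning forces $\sE(\bsn'\bG\bsn)=0$ is correct and worth noting.
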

\begin{proof}
The same argument that established that $\textrm{E}(\hat{\bsn}'\bse)=0$ in the proof of Corollary 4 also yields var$(\bsn'\bse)=0$.  Thus, by Theorem 6,
\begin{align*}
\Var(\bk'\hat{\bsB})=& \sigma^2\textrm{ E}[\bk'(\bX'\bV^{-1}\bX)^-\bk]-\sigma^2\textrm{E}(\bsn'\bG\bsn)
\end{align*}
\end{proof}

\noindent Since $\sigma^2\textrm{E}(\bsn'\bG\bsn)\geq 0$, the variance of $\bk'\hat{\bsB}$ is generally smaller under the conditions of Corollary~\ref{t:6} than it is under the conditions of Corollary~\ref{t:5}.

\begin{theorem}\label{t:7}
If $\bZ$ is stochastic in the fixed effects model but $\bds{\epsilon}$ and $\bZ$ are independent, then \normalfont $\Var(\bk'\tilde{\bsB})=\sigma^2\sE\{\bk'[\bX'\bR^{-1}\bX-\bX'\bR^{-1}\bZ(\bZ'\bR^{-1}\bZ)^-\bZ'\bR^{-1}\bX]^{-} \bk\}$.
\end{theorem}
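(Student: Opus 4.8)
The plan is to condition on $\bZ$ and apply the law of total variance, $\Var(\bk'\tilde{\bsB})=\Var\{\sE[\bk'\tilde{\bsB}\,|\,\bZ]\}+\sE\{\Var[\bk'\tilde{\bsB}\,|\,\bZ]\}$, and to show that the first term vanishes while the second reduces to the stated expression. For the first term, I would note that the computation carried out in the proof of Theorem~\ref{t:fixed} in fact establishes the stronger fact that, on the probability-one event on which $\bk'\bsB$ is estimable under the fixed effects model, $\sE[\bk'\tilde{\bsB}\,|\,\bZ]=\bk'\bsB$ almost surely: writing $\bX_@=\bR^{-1/2}\bX$, $\bZ_@=\bR^{-1/2}\bZ$, $\bds{\epsilon}_@=\bR^{-1/2}\bds{\epsilon}$ and $\bP_{\bZ_@}=\bZ_@(\bZ_@'\bZ_@)^{-}\bZ_@'$, the term $\bX_@'(\bI-\bP_{\bZ_@})\bX_@\bsB$ collapses to $\bk'\bsB$ via estimability and Lemma~\ref{lemma:ginv}, the random-effects contribution is annihilated because $(\bI-\bP_{\bZ_@})\bZ_@=\bzero$, and $\sE(\bds{\epsilon}_@\,|\,\bZ)=\bzero$ since $\bds{\epsilon}\perp\bZ$. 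Hence $\Var\{\sE[\bk'\tilde{\bsB}\,|\,\bZ]\}=0$ and everything reduces to $\sE\{\Var[\bk'\tilde{\bsB}\,|\,\bZ]\}$.

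Next I would fix a realization of $\bZ$ in that estimable event and compute the conditional variance. Set $\bB=(\bI-\bP_{\bZ_@})\bX_@$ and $\bC=\bB'\bB=\bX_@'(\bI-\bP_{\bZ_@})\bX_@$, using that $\bI-\bP_{\bZ_@}$ is symmetric and idempotent. From the proof of Theorem~\ref{t:fixed}, $\bk'\tilde{\bsB}=\bk'\bsB+\bk'\bC^{-}\bB'\bds{\epsilon}_@$, so conditional on $\bZ$ it is affine in $\bds{\epsilon}_@$, whose conditional covariance matrix is $\sigma^2\bR^{-1/2}\bR\bR^{-1/2}=\sigma^2\bI$. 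Therefore $\Var[\bk'\tilde{\bsB}\,|\,\bZ]=\sigma^2\,\bk'\bC^{-}\bB'\bB(\bC^{-})'\bk=\sigma^2\,\bk'\bC^{-}\bC(\bC^{-})'\bk$.

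The main work is then to show $\bk'\bC^{-}\bC(\bC^{-})'\bk=\bk'\bC^{-}\bk$. Since $\bk'\bsB$ is estimable (with probability one) under the fixed effects model with $\bk_*'=[\bk',\bzero']$, there is a vector $\bt_{\bZ}$ with $\bk'=\bt_{\bZ}'(\bI-\bP_{\bZ_@})\bX_@=\bt_{\bZ}'\bB$ (the same $\bt_{\bZ}$ as in Theorem~\ref{t:fixed}). Applying Lemma~\ref{lemma:ginv} with its ``$\bA$'' taken to be $\bB$ and its ``$\bM$'' taken to be $\bC^{-}$ shows $\bC^{-}\bB'$ is a generalized inverse of $\bB$, i.e. $\bB\bC^{-}\bB'\bB=\bB$, equivalently $\bB\bC^{-}\bC=\bB$. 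Hence $\bk'\bC^{-}\bC(\bC^{-})'\bk=\bt_{\bZ}'\bB\bC^{-}\bC(\bC^{-})'\bB'\bt_{\bZ}=\bt_{\bZ}'\bB(\bC^{-})'\bB'\bt_{\bZ}$, and being a scalar it equals its own transpose $\bt_{\bZ}'\bB\bC^{-}\bB'\bt_{\bZ}=\bk'\bC^{-}\bk$ (this also confirms the expression does not depend on the choice of generalized inverse). Finally I would rewrite $\bX_@'(\bI-\bP_{\bZ_@})\bX_@=\bX'\bR^{-1}\bX-\bX'\bR^{-1}\bZ(\bZ'\bR^{-1}\bZ)^{-}\bZ'\bR^{-1}\bX$, using $\bX_@'\bX_@=\bX'\bR^{-1}\bX$, $\bZ_@'\bZ_@=\bZ'\bR^{-1}\bZ$, $\bX_@'\bZ_@=\bX'\bR^{-1}\bZ$, so that $\Var[\bk'\tilde{\bsB}\,|\,\bZ]=\sigma^2\,\bk'[\bX'\bR^{-1}\bX-\bX'\bR^{-1}\bZ(\bZ'\bR^{-1}\bZ)^{-}\bZ'\bR^{-1}\bX]^{-}\bk$, and taking the expectation over $\bZ$ gives the claim.

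I expect the main obstacle to be the generalized-inverse bookkeeping in the third paragraph: deriving $\bB\bC^{-}\bC=\bB$ from Lemma~\ref{lemma:ginv} and collapsing $\bk'\bC^{-}\bC(\bC^{-})'\bk$ to $\bk'\bC^{-}\bk$ via the scalar-transpose trick without assuming $\bC^{-}$ is symmetric, together with the bookkeeping of keeping every ``$\bt_{\bZ}$ exists'' and ``$\bP_{\bZ_@}$ is the orthogonal projection'' statement confined to a probability-one set so that the outer expectation over $\bZ$ is well defined.
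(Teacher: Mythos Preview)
Your proposal is correct and follows essentially the same route as the paper: apply the law of total variance conditioning on $\bZ$, use Theorem~\ref{t:fixed} to kill the $\Var\{\sE[\cdot\,|\,\bZ]\}$ term, compute the conditional variance of the linear form in $\bds{\epsilon}_@$, collapse $(\bI-\bP_{\bZ_@})^2$ to $(\bI-\bP_{\bZ_@})$, reduce $\bk'\bC^{-}\bC(\bC^{-})'\bk$ to $\bk'\bC^{-}\bk$, and expand $\bX_@'(\bI-\bP_{\bZ_@})\bX_@$ in terms of $\bR^{-1}$. If anything, your write-up is more explicit than the paper's at the generalized-inverse step: the paper simply passes from $\bk'\bC^{-}\bC\,\bC^{-}\bk$ to $\bk'\bC^{-}\bk$ in one line, whereas you justify it via Lemma~\ref{lemma:ginv} and the scalar-transpose trick without assuming $\bC^{-}$ is symmetric.
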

\begin{proof}
Using the same notation as in the proof of Theorem~\ref{t:fixed},
\begin{align*}
\Var(\bk'\tilde{\bsB})=&\sE\{\sE\{\bk'[\bX_@'(\bI-\bP_{\bZ_@})\bX_@]^{-}\bX_@'(\bI-\bP_{\bZ_@})\bds{\epsilon}_@\bds{\epsilon_@}'\\
&\times (\bI-\bP_{\bZ_@})\bX_@[\bX_@'(\bI-\bP_{\bZ_@})\bX_@]^{-}\bk|\bZ\}\}+0\\
=&\sigma^2\sE\{\bk'[\bX_@'(\bI-\bP_{\bZ_@})\bX_@]^{-}\bX_@'(\bI-\bP_{\bZ_@})\bX_@[\bX_@'(\bI-\bP_{\bZ_@})\bX_@]^{-}\bk\}\\
=&\sigma^2\sE\{\bk'[\bX_@'(\bI-\bP_{\bZ_@})\bX_@]^{-}\bk\}\\
=&\sigma^2\sE\{\bk'\{\bX'\bR^{-1/2}[\bI-\bR^{-1/2}\bZ(\bZ'\bR^{-1}\bZ)^{-}\bZ'\bR^{-1/2}]\bR^{-1/2}\bX\}^{-}\bk\}\\
=&\sigma^2\sE\{\bk'[\bX'\bR^{-1}\bX-\bX'\bR^{-1}\bZ(\bZ'\bR^{-1}\bZ)^-\bZ'\bR^{-1}\bX]^{-} \bk\}
\end{align*}

\end{proof}
\noindent By Theorem~\ref{t:7}, the variance of $\bk'\tilde{\bsB}$ is equal to $\sigma^2\bk'(\bX'\bR^{-1}\bX)^-\bk$ when $\bX'\bR^{-1}\bZ=0$ (with probability 1). In the disallowed case where $\bZ=\bX$ (with probability 1), the expression inside of the generalized inverse is $\bds{0}$.

\section{Bias Diagnostics}
In situations where the mixed effects model E-BLUE ($\bk'\hat{\hat{\bsB}}$) is biased under stochastic-$\bZ$ assumption but the fixed effects model BLUE ($\bk'\tilde{\bsB}$) is unbiased, the magnitude of the difference $\bk'\hat{\hat{\bsB}}-\bk'\tilde{\bsB}$ provides an estimate of the bias. \citet{hausman} uses this difference to formulate a specification test for the consistency of the generalized least squares estimators in a random intercept model.

Alternatively, the EBLUP of $\bse$ from the fitted mixed effects model can be substituted into $\sE[\hat{\bsn}'\bse]$ to produce a diagnostic value $\hat{\bsn}'\hat{\bse}$ that can serve as an internal estimate of the bias of $\bk'\hat{\hat{\bsB}}$. And although it is not pursued further here, the plug-in estimate $\hat{\bsn}$ of $\bsn$ might benefit from using the bias-corrected precision estimator of \citet{kr2} in place of $(\bX'\hat{\bV}^{-1}\bX)^{-}$.

\subsection{Randomization Test for Independence of $\bsn$ and $\bse$}\label{sec:randomization}

Even when $\bX$ and $\bZ$ are not orthogonal with respect to $\bR^{-1}$ (Corollary \ref{t:3}), $\bk'\hat{\hat{\bsB}}$ is still unbiased if $\bse$ is sampled independently of $\bsn$ (Corollary \ref{t:2}). Under the null hypothesis of independence of $\bse$ and $\bsn$, the observed value $\hat{\bsn}'\hat{\bse}$ 
could be compared to the randomized permutation distribution \citep{edington} of 
\begin{equation*}
\hat{\bsn}'\bds{\pi}\left(\hat{\bse}\right)
\end{equation*}
where $\bds{\pi}()$ is a permutation function and $\hat{\bsn}$ is held fixed at the value obtained during the original model fit. For a diagonal $\bG$, the permutations performed by $\bds{\pi}(\hat{\bse})$ are stratified within each random factor present in $\bse$ (corresponding to unique diagonal entries of $\bsG$). More generally, when $\bG$ has nonnull off-diagonal entries, the permutation function $\bds{\pi}(\hat{\bse})$ can be constructed as follows.
\begin{enumerate}
\item{Simulate a vector, $\bds{w_0}$, from $N(\bds{0},\hat{\bG})$.}
\item{Let $\bds{w}=\bds{w_0}$.}
\item{For each random factor (corresponding to the unique diagonal elements of $\bG$), replace the smallest element of $\bds{w}$ with the smallest element of the corresponding factor from $\hat{\bse}$.}
\item{Repeat Step 3  for the component of $\bds{w}$ corresponding to the second smallest entry in the original $\bds{w_0}$ for each factor, the third smallest, etc.}
\item{Return $\bds{w}$.}
\end{enumerate}
This function $\bds{\pi}(\hat{\bse})$ shuffles $\hat{\bse}$ within each factor according to the correlation structure assumed by $\bG$, provided that $\bG$ has equicorrelation within factors (meaning that random effects within a factor are exchangeable).

The permutation distribution provides an estimate of the distribution of the observed  $\hat{\bsn}'\hat{\bse}$ that would be expected under random sampling of $\bse$ (prior to the generation of $\bY$), and the mean of this distribution provides an estimate of $\sE[\hat{\bsn}'\bse]$ under random sampling of the random effects, a value that should equal 0 if the other conditions of Corollary~\ref{t:2} hold. The percentile of the observed value within the distribution of permuted values can be used to detect inconsistency with the hypothesis of independence of $\bse$ and $\bsn$. Small percentiles (say, $<0.5$), corresponding to negative bias, or large percentiles (say, $>99.5$), corresponding to positive bias, indicate that $\bse$ and $\bsn$ may not be independent. As demonstrated in the next section, a plot of the randomized permutation distribution along with a vertical line at $\hat{\bsn}'\hat{\bse}$ provides a useful graphical summary for each estimable effect $\bk'\bsB$ of interest.

While the predictions $\hat{\bse}$ from many common mixed models sum to 0 within each random factor \citep{s}, this may not be true in all cases. If this sum is different from zero, then the randomization distribution of $\hat{\bsn}'\bds{\pi}\left(\hat{\bse}\right)$ may have mean different from zero, indicating the presence of bias in $\bk'\hat{\hat{\bsB}}$ regardless of the randomization of $\bse$. This would represent some other departure from modeling assumptions, perhaps due to dependence between the model matrices and $\bds{\epsilon}$, or  due to the presence of a nonignorable missingness process.

\subsection{Simulation Based Estimate of Bias}\label{sec:simtest}
While Equation~\ref{eq:bias} suggests that $\hat{\bsn}'\hat{\bse}$ can serve as an internal estimate of the  bias for the associated estimator of the estimable function using only a single model fit, it is also possible to estimate the bias via simulation by using the model matrices and estimates from the original fit and repeatedly replacing the residual vector with a random vector drawn from $\bN(\bds{0},\hat{\sigma}^2\bR)$. Specifically,

\begin{enumerate}
\item{\label{item:21}Fit $\bsY|\bse \sim N\left(\bsX\bsB+\bsZ\bse,\sigma^2\bsR\right)$ where $\bse \sim N\left(\bds{0},{\sigma}^2\bsG\right)$ in order to obtain solutions $\hat{\hat{\bsB}}$, $\hat{\bse}$, and $\hat{\sigma}^2\bR$. The estimated random effects covariance matrix, $\hat{\sigma}^2\hat{\bG}$, is not used in the rest of the simulation.}
\item{\label{item:22} Repeatedly simulate a new response vector $\bsY_s=\bsX\hat{\hat{\bsB}}+\bsZ\hat{\bse} + \be_s$, where $\be_s$ is a random deviate from $N(\bds{0},\hat{\sigma}^2\bsR)$. Then fit $\bY_s$ using the same mixed model as Step 1 and record the estimate $\bk'\hat{\hat{\bsB_s}}$.}
\item{\label{item:23}Compare the mean of the sampling distribution of $\bk'\hat{\hat{\bsB_s}}$  to the target of $\bk'\hat{\hat{\bsB}}$ (the original estimate from $\bY$ in Step 1). The difference between $mean_s(\bk'\hat{\hat{\bsB}}_s)$  and $\bk'\hat{\hat{\bsB}}$ provides an estimate of the bias in the E-BLUE of $\bk'\bsB$ for this particular data set.} 
\end{enumerate}
The advantage of the internal estimate of bias, $\hat{\bsn}'\hat{\bse}$, over the estimate obtained by simulation is that the model does not need to be fit repeatedly and it can be calculated using matrices that have already been produced by the original model fit. In steps 2 and 3, $\tilde{\bsB}$ could be used in place of $\hat{\hat{\bsB}}$ if it is available.

\section{Bias in the Mixed Model HFA Estimator}\label{sec:cont}
Table~\ref{tab:2017sport} contains the estimated bias $\hat{\bsn}'\hat{\bse}$ for each sport in 2017 from the application of Section~\ref{sec:HFE} (where $k=1$ and the estimable effect of interest is simply the intercept). These represent an internal estimate of the bias in the HFA E-BLUE. They are strongly correlated ($\rho=0.990$) with the difference between the mixed and fixed effects model estimates of HFA, and  with the simulation estimates ($\rho=0.996$), which represent external estimates of the mixed model bias. The percentile of the observed value $\hat{\bsn}'\hat{\bse}$  in the distribution of a sample of one million permutations $\hat{\bsn}'\bds{\pi}(\hat{\bse})$ is also reported, and is displayed for the 2017 seasons of the sports in Figure~\ref{fig:permsim}. The college sports produce observed values that are larger than all of the other sampled permutations of $\hat{\bse}$, providing strong evidence that  $\bsn$ and $\bse$ are not independent in these applications. Combined with the fact that none of the means of the sampling distributions differ substantially from zero (see the supplementary data tables and code provided by \citet{kzdata}, which also contains the results from the other 17 seasons of each sport), we can conclude that the mixed effects model E-BLUEs of HFA in Table~\ref{tab:2017sport}  pertaining to the college sports are significantly biased upward. Additionally, the sum of the team ratings $\hat{\bse}$ in each year is equal to 0, as expected \citep{s}.

% Please add the following required packages to your document preamble:
% \usepackage{booktabs}
\begin{table}[]
\caption{: 2017 HFA estimates (in points) from the mixed and fixed effect models, along with the standard error of the mixed model estimates (calculated under the fixed-$\bZ$ assumption), the difference between the mixed and fixed HFA estimates, the internal estimates of bias, and the percentile of the observed bias in the randomized permutation distribution. The final two columns present the simulation results described in Section~\ref{sec:simtest}, with $\tilde{\bsB}$ used instead of $\hat{\hat{\bsB}}$ in Steps 2 and 3.}
\label{tab:2017sport}
\resizebox{\columnwidth}{!}{%
\begin{tabular}{@{}lllllllll@{}}
\toprule
\textbf{Sport} & $\begin{array}{l} \textrm{\textbf{Fixed}}\\(\tilde{\lambda})
\end{array}$ & $\begin{array}{l} \textrm{\textbf{Mixed}}\\(\hat{\hat{\lambda}})
\end{array}$ & $\begin{array}{l} \textrm{\textbf{Mixed}}\\\textrm{\textbf{S.E}}\end{array}$& $\begin{array}{l} \textrm{\textbf{Mixed}}\\\textrm{\textbf{- Fixed}}\end{array}$&$\hat{\bsn}'\hat{\bse}$  & $\begin{array}{l} \textrm{\textbf{Percentile}}\\\textrm{\textbf{in }}\hat{\bsn}'\bds{\pi}(\hat{\bse})\end{array}$&$\begin{array}{l} \textrm{\textbf{1k Sim}}\\\textrm{\textbf{Mean}}\end{array}$& $\begin{array}{l} \textrm{\textbf{1k Sim}}\\\textrm{\textbf{Bias}}\end{array}$ \\ \midrule
NCAA Football & 2.38 & 2.92&0.55 & 0.54 & 0.38 & $>99.9999$&2.86&0.49\\
NCAA Basketball (M) & 2.81 & 3.13&0.16 & 0.32 & 0.28 & $>99.9999$&3.14&0.33 \\
NCAA Basketball (W) & 2.94 & 3.10&0.17 & 0.16 & 0.14 & $>99.9999$&3.10&0.16 \\
NFL & 2.36 & 2.41&0.80 & 0.05 & 0.03 & 97.8&2.39&0.03 \\
NBA & 3.12 & 3.12&0.36 & 0.00 & 0.00 & 87.4&3.12&0.00 \\
WNBA & 3.26 & 3.26&0.86 & 0.00 & 0.00 & 42.3&3.27&0.01 \\ \bottomrule
\end{tabular}%
}
\end{table}

If every team in a sport plays the same number of home games as they play away games, then under model (\ref{eq:mixed}), $\bX'\bI\bZ=\bds{0}$ and the conditions of Corollaries~\ref{t:3} and \ref{t:6} hold. The NBA schedule is only slightly unbalanced in this sense due to neutral site games, while the WNBA schedule is completely balanced. This is reflected in Table~\ref{tab:2017sport} by the rounded-to-zero WNBA estimate for $\hat{\bsn}\hat{\bse}$ (-3e-17) and by the  extremely small variability (due only to limitation in numerical precision) in the WNBA permutation distribution in Figure~\ref{fig:permsim}.

 To assess the practical significance of the observed bias, the magnitude of $\hat{\bsn}'\hat{\bse}$ could be compared to the standard error of the corresponding point estimate. The standard errors reported in Table~\ref{tab:2017sport} are those produced by the software under the fixed-$\bZ$ assumption, rather than those that would be produced by Corollary~\ref{t:4}. For college men's and women's basketball, the bias is equal to $175\%$ and $82\%$, respectively, of the standard error, while for college football the estimated bias is equal to $69\%$ of the standard error. In the professional sports, the bias estimates are negligible compared to the standard errors.

\begin{figure}
\caption{: Randomized permutation distributions (one million permutations) of $\hat{\bsn}'\bds{\pi}(\hat{\bse})$ for the 2017 schedules for each sport. The solid line (blue) indicates the observed bias, $\hat{\bsn}'\hat{\bse}$.  The dotted line (black) is placed at zero to show that the sampling distributions have mean 0 for this application. The two dashed lines (red) indicate the 0.5 and 99.5 percentiles for the permutation distributions.}
\label{fig:permsim}
\includegraphics[scale=.93]{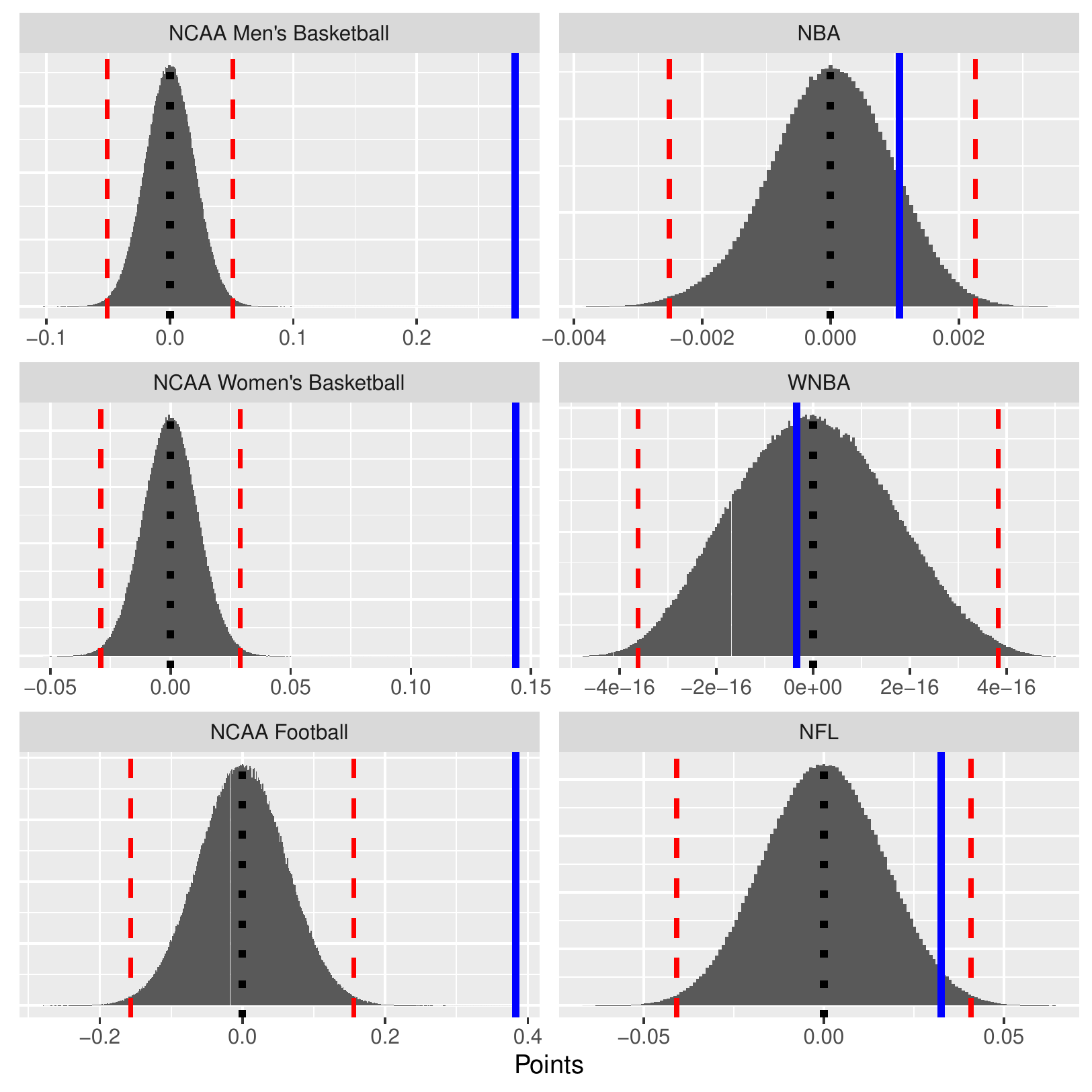}
\end{figure} 
 
\section{Simulating the Sports Scheduling Problem}\label{sec:sportssim}
Since the bias of   $\bk'\hat{\hat{\bsB}}$ depends on the expected value of $\hat{\bsn}'\bse$ where $\bZ$ and $\bse$ are dependent, an experimenter with (approximate) knowledge of $\bse$ could induce positive (or negative) bias in $\bk'\hat{\hat{\bsB}}$ by selecting $\bZ$ such that $\textrm{E}[\hat{\bsn}'\bse]>0$ (or $\textrm{E}[\hat{\bsn}'\bse]<0$).

Figure~\ref{fig:permsim} shows that the observed values of $\hat{\bsn}'\hat{\bse}$ for the college sports are larger than any of the million sampled permutations  $\hat{\bsn}'\bds{\pi}(\hat{\bse})$, suggesting that the nonrandom mechanism by which college sports schedules are built produces larger values of $\hat{\bsn}'\bse$ than would be expected under random scheduling. Using this knowledge, it is possible to reproduce the bias in the home field advantage estimator from the mixed model via simulation by selecting the schedule $\bZ$ (after $\bse$ as been generated) that maximizes $\bsn'\bse$, replicating the behavior of the real schedules observed in Figure~\ref{fig:permsim}. R code to reproduce the results of this section is available in the supplementary material \citep{kzdata}.

In order to simulate the biasing behavior seen in Figure~\ref{plot:marg1}, 5000 candidate schedules ($\bZ$) with 12 games per team are generated after the vector of 50 team strength effects, $\bse_s\sim N_{50}(\bds{0},225\bI)$, has been generated. The only restriction on the teams during the game assignments is that teams are not allowed to play themselves. As a result, the teams do not necessarily play the same number of home games.

The schedule, $\bZ_s$, that maximizes $\hat{\bsn}'\bse_s$ is selected, and then game outcomes, $\bY_s=\bds{0}+\bZ_s\bse_s+\bds{e}_s$, are simulated, where $\bds{e}_s \sim N_{300}(\bds{0},529\bI)$. The variances were chosen to represent typical values observed from historical data. Note that $\bsn$ (with $k=1$ in this application) depends on $\bsG$ and $\bsR$; in practice, estimates of these matrices are available from previous seasons. $\bY_s$ is fit with the fixed effects model (Equation~\ref{eq:femodel}) and with the mixed effects model (Equation~\ref{eq:mixed}), and the estimates for the intercept are recorded. 1000 such simulations are run in this example, all using the same schedule ($\bZ_s$) and teams ($\bse_s$).

\begin{figure}
\caption{: Box plots of 1000 simulations of home field advantage estimates from both models when $\bsZ_s$ has been selected in order to maximize $\bsn'\bse$. The true value for the HFA in the simulation was 0. The third column contains the box plot for the matched pair differences between intercept estimates from each simulation: E-BLUE minus the BLUE.}
\label{fig:gamesimgraph1}
\includegraphics[scale=.55]{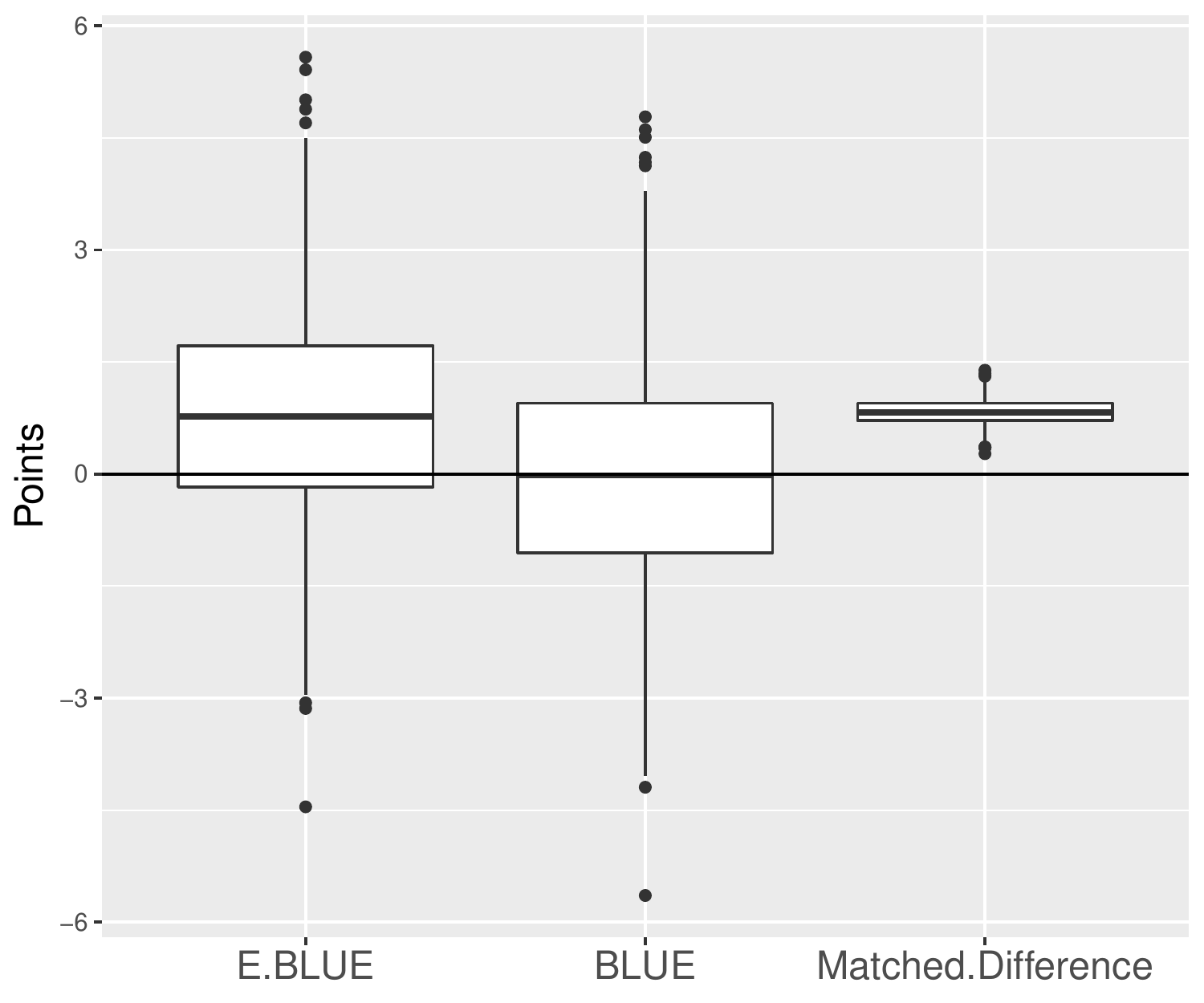}
\end{figure}

Figure~\ref{fig:gamesimgraph1} shows how the mixed effects model produces biased E-BLUEs of the intercept in the presence of this carefully chosen $\bZ_s$, while the BLUE from fitting the fixed effects model remains unbiased. The HFA E-BLUEs from the 1000 simulations have a mean of 0.7892 and a 95\% confidence interval of $(0.7014,0.8771)$. This confidence interval contains the true simulated bias of $\bsn'\bse=0.7108$. The corresponding BLUES of HFA from fitting the fixed effects model have a mean of $-0.1065$ and a 95\% confidence interval of $(-0.1350, 0.0469)$, which contains the simulated HFA of 0. Figure~\ref{fig:gamesimgraph1} also shows the matched pair differences between the HFA estimates from the two models: the E-BLUE is uniformly larger than the BLUE from the fixed effects model for the same data set.  These differences show a correlation of $0.97$ with the observed values of $\hat{\bsn}'\hat{\bse}$.

\section{Conclusion}
We have seen how the mixed  model (Equation~\ref{eq:mixedmodel}) estimators of $\bk'\bsB$ are biased  when the random effects vector $\bse$ and the random effects model matrix $\bZ$ are dependent, unless  $\bZ$ is orthogonal to $\bX$ with respect to $\bR^{-1}$ (with probability 1).

The primary recommendations resulting from this study are
\begin{enumerate}
\item{Mixed model estimation software could print the estimated bias, $\hat{\bsn}'\hat{\bse}$, next to the parameter estimates (or estimates of user-specified estimators), along with the percentile of this value from within the randomization distribution $\hat{\bsn}'\bds{\pi}(\hat{\bse})$, and the graphs shown in Figure~\ref{fig:permsim}. }
\item{When designing an experiment, choose $\bZ$ and $\bX$ such that $\bX'\bR^{-1}\bZ=\bds{0}$, for an anticipated error covariance matrix $\bR$, in order to obtain an unbiased estimator $\bk'\hat{\hat{\bsB}}$ and in order to minimize both $\Var(\bk'\hat{\hat{\bsB}})$ and $\Var({\bsn}'\bse)$. For observational data analyses, it may be possible to include fixed effects (possibly covariates that were missing from an initial analysis) such that this orthogonality condition holds. }
\item{Be alert that, given an anticipated random effects vector $\bse$, it is possible to build $\bX$ and/or $\bZ$ to intentionally induce bias in the mixed model estimator $\bk
'\hat{\hat{\bsB}}$. This is, of course, not a recommended practice; however, there is a potential for this property to be exploited by a malicious party.}
\end{enumerate}

A similar biasing process also likely exists for generalized linear mixed models (GLMMs). The study is more difficult in that setting owing to a lack of closed form estimators of estimable functions of $\bsB$  \citep{karlcgs}. However, given a set of maximum likelihood estimates for the GLMM, the linearization technique of \citet{wolfinger93} -- the default estimation routine in SAS PROC GLIMMIX --  could be used in order to apply the methods of this paper to a  pseudo-response of the linearized GLMM likelihood function in order to obtain an approximation of the bias. Finally, the behavior of the predicted random effects themselves could also be studied. However, this situation is not as straightforward because these individual effects are not always estimable when converted to fixed effects (as with the HFA example) and because of the shrinkage properties of the random effects. For a predictor $\hat{\bse}$ of the random effects, unbiasedness requires $\bE[\hat{\bse}]=\bE[\bse]=\bds{0}$, and not that $\bE[\hat{\bse}|\bse]=\bse$ for all $\bse$ \citep{robinson1991}.

\section*{Appendix: Power Analysis for Randomization Test}
This appendix presents a simulation to study the power of the randomization test for independence of $\bsn$ and $\bse$ described in Section~\ref{sec:randomization}. This involves setting up a plausible simulation for a process under consideration and then examining the behavior of the randomization test as the simulated dependence between $\bsn$ and $\bse$ varies. In this case, we will use the 2017 Men's NCAA Basketball regular season results as a foundation, following the first two steps of the simulation procedure described in Section~\ref{sec:simtest}, except we will   simply use $\bds{0}$ instead of $\hat{\hat{\bsB}}$ in Step~2 in order to simulate a season with no HFA. We generate and fit two thousand new response vectors as described by that Step~2. For each of the resulting fitted models, we check whether the observed value $\hat{\bsn}'\hat{\bse}$ falls between the 2.5 and 97.5 percentiles of the randomized permutation distribution (with one million permutations) in order to form a hypothesis test with level $\alpha=0.05$. The null hypothesis is that $\bsn$ and $\bse$ are independent, and it is rejected in favor of the alternative hypothesis of dependence when $\hat{\bsn}'\hat{\bse}$ falls outside of the interval. 

To see how the power of the test depends on the dependence of $\bsn$ and $\bse$, we modify the simulations by adding a switch proportion $p_s$ parameter that governs the proportion of the games from the 2017 schedule that switch home and away team assignments prior to each new simulated season. A proportion of 0 corresponds to the original schedule, and the rejection rate under this scenario yields an estimate of the power of the randomization test under the 2017 Men's NCAA scheduling scheme. A proportion of 0.5 corresponds to the original schedule game-pairings but with randomized home-away assignments. Intermediate values of the mixing proportion yield simulated schedules with different degrees of randomization for the home-away assignments. Finally, a scenario in which $\bse$ is shuffled prior to each simulated season (before being premultiplied by the same schedule structure, $\bZ$) yields a randomized schedule: the rejection rate here yields the observed Type I error rate that can be compared to the nominal rate of $\alpha=0.05$.

\begin{table}[]
\caption{: Results from 2000 simulations for each $p_s$ and ``shuffle schedule'' combination.  The rejection rate for the first row is the observed type I error rate, with a nominal probability of $\alpha=0.05$. The value of $\bsB=0$ was used to generate the simulations, meaning any nonzero values of Mean($\hat{\hat{\bsB}}$) represent bias. }
\label{tab:power}

\begin{tabular}{@{}lllll@{}}
\toprule
$p_s$ & $\begin{array}{l} \textrm{\textbf{Shuffle}}\\\textrm{\textbf{Schedule}}
\end{array}$ &$\begin{array}{l} \textrm{\textbf{Rejection}}\\\textrm{\textbf{Rate (RR)}}
\end{array}$  &$\begin{array}{l} \textrm{\textbf{Mean}}\\\hat{\hat{\beta}}
\end{array}$  &$\begin{array}{l} \textrm{\textbf{Mean}}\\\hat{\bsn}'\hat{\bse}
\end{array}$    \\ \midrule
0                 & TRUE       &   0.046    &   -0.0014       &-0.00046                  \\
0                 &  FALSE      &     1   &      0.33     &  0.28                 \\
0.25             &   FALSE      &   1   &     0.15      & 0.13            \\
0.40             &   FALSE      &   0.99    &     0.057      & 0.052                \\
0.45              &  FALSE      &   0.66     &     0.026      &   0.026               \\
0.50            &  FALSE       &    0.12   &     -0.0026      &   0.00034 \\
1.0               &   FALSE      &   1    &     -0.33      &    -0.28             \\ \bottomrule
\end{tabular}

\end{table}

Table~\ref{tab:power} shows the results of the simulation study. Even when the home-away assignments have been completely randomized ($p_s=0.5$), the rejection rate of 0.12 is larger than the nominal rate. However, when the game-pairing assignments themselves are randomized (Shuffle Schedule = TRUE), the rejection rate indicates a Type I error rate of 0.046. This suggests that there may be some additional non-random aspect of the schedule construction that is being detected in the $p_s=0.5$ scenario.  All of the simulations for the original schedule ($p_s=0$ and no shuffle) lead to a rejection of the null hypothesis.

The test has a power of 0.99 in detecting the positive bias even when 40\% of games have their home-away assignment switched in between each simulation. In this case, the magnitude of the bias has shrunk to 0.06 points from the bias of 0.33 points seen in the original schedule. This illustrates the importance of examining the practical significance of any detected bias. The scenario ($p_s=1$) considers switching every home and away assignment.  Unsurprisingly, this produces the same magnitude of bias in the HFA estimate, though in the opposite direction.

Since the HFA was set to zero in the simulation, the third column listing the mean of $\hat{\hat{\bsB}}$ from the simulations gives the bias in these scenarios. These values are strongly correlated with the  final column in the table, which lists the mean of the internal bias estimates ($\hat{\bsn}'\hat{\bse}$). This provides validation (within this particular example) of the use of the plug-in value $\hat{\bsn}'\hat{\bse}$ to estimate the bias of $\bk'\hat{\hat{\bsB}}.$

\section*{Acknowledgments}
Thank you to Sharon Lohr for reviewing and improving early drafts, including her suggestion to plot the randomization distributions in the manner of \citet[Figure 3.5]{boxhh}. We are also grateful for constructive comments from the anonymous referees.

\small
\bibliographystyle{agsm}

\bibliography{disbib4}
\end{document}